\numberwithin{equation}{section}
\newsavebox{\mybox}
\newtheorem{theorem}{Theorem}[section]
\newtheorem{lemma}[theorem]{Lemma}
\newtheorem{corollary}[theorem]{Corollary}
\newtheorem{definition}[theorem]{Definition}
\newtheorem{proposition}[theorem]{Proposition}
\newtheorem{condition}[theorem]{Condition}
\newtheorem{remark}[theorem]{Remark}
\def\dbE{{\mathbb{E}}}
\def\dbF{{\mathbb{F}}}
\def\dbP{{\mathbb{P}}}
\def\f{\varphi}
\def\o{\omega}
\def\O{\Omega}
\def\cF{{\cal F}}
\def\${|\!|\!|}
\def\({\left(}
\def\){\right)}
\newcounter{bean}
\newcommand{\benuma}{\setlength{\labelwidth}{.25in}
	\begin{list}%
		{(\alph{bean})}{\usecounter{bean}}}
	\newcommand{\eenuma}{\end{list}}
\newcommand{\bi}{\begin{itemize}}
	\newcommand{\ei}{\end{itemize}}
\newcommand{\be}{\begin{enumerate}}
	\newcommand{\ee}{\end{enumerate}}
\newcommand{\beqs}{\begin{equation*}}
\newcommand{\eeqs}{\end{equation*}}
\newcommand{\beq}{\begin{equation}}
    \newcommand{\eeq}{\end{equation}}
    \newcommand{\bald}{\begin{aligned}}
        \newcommand{\eald}{\end{aligned}}
\newcommand{\beqys}{\begin{eqnarray*}}
	\newcommand{\eeqys}{\end{eqnarray*}}
\newcommand{\beqy}{\begin{eqnarray}}
\newcommand{\eeqy}{\end{eqnarray}}
\mathchardef\mhyphen="2D
\newcommand{\bp}{\begin{pmatrix}}
	\newcommand{\ep}{\end{pmatrix}}
\begin{document}
\title{ \bf  Infinite dimensional open-loop linear quadratic stochastic optimal control problems  and related  games}
\date{}

\author{Guangdong Jing \thanks{  
    {E-mail:} 
   {jingguangdong@mail.sdu.edu.cn.} }
\\  \\  
School of Mathematics and Statistics, \\ 
Beijing Institute of Technology, 
Beijing 100081, China.   
}

\maketitle
\begin{abstract}
  \noindent
We investigate the linear quadratic stochastic optimal control problems in infinite dimension without Markovian restriction for coefficients. The necessary and sufficient conditions for open-loop optimal controls are presented. We prove the Fr\'echet differentiable of the cost functional with respect to the control variable, and the Fr\'echet derivatives are characterized in detail by operators derived from dual analysis, which are proven to be the stationary conditions. Transposition methods are adopted to deal with the adjoint equations. As applications, we employ the results to study open-loop Nash equilibria for two-person stochastic differential games. 

\end{abstract}
 
\bigskip

\noindent{\bf AMS  subject classifications}.  49N70, 49K20,  49N10, 49K45,   93E20, 91A23

\bigskip

\noindent{\bf Key Words}.  Linear quadratic; Stochastic optimal control; Two-person stochastic differential game; Transposition solution; Forward backward stochastic evolution equation.

\section{Introduction}

For any given initial pair $(t,\eta) \in [0,T)\times H$, consider a control system governed by the following linear stochastic evolution equations (SEEs, for short) on a finite time horizon:
\begin{equation}\label{zfxk}
    \left\{
    \begin{aligned}
& d x(s)=[(A+A_1(s)) x(s)+B(s) u(s)+b(s)] d s 
\\  & \indent \indent 
+[C(s) x(s)+D(s) u(s)+\sigma (s)] d W(s) \quad \text{in}\ (t,T],
\\
& x(t)=\eta,      
\end{aligned}
\right.
\end{equation}
where $A$ generates a $C_0$-semigroup $\{e^{At}\}_{t\ge0}$ on $H$.  
The process $u(\cdot) \in \cal U[t,T]:= L_{\mathbb{F}}^{2}(t, T ; U) $  denotes the control variable, while ${ x(\cdot) }$  denotes the state process.

Associated with the control system \eqref{zfxk}, consider the following quadratic cost functional:
\begin{equation}\label{bdjfs}
    \begin{aligned}
        \mathcal{J}(t,\eta ; u(\cdot)) = & \frac{1}{2} \mathbb{E}\Big[ \int_{t}^{T} \big( \langle Q(s) x(s), x(s)\rangle_{H}+\langle R(s) u(s), u(s)\rangle_{U} + 2 \langle S(s)x(s),u(s) \rangle_U 
        \\  & \indent 
        +2 \langle \mathfrak{q}(s), x(s)\rangle_{H} 
        +2 \langle \mathfrak{r} (s), u(s)\rangle_{U} \big) d s 
         +\langle G x(T), x(T)\rangle_{H} + 2 \langle \mathfrak{g}, x(T)\rangle_{H}\Big].  
    \end{aligned}
\end{equation}


The stochastic linear quadratic (LQ, for short) optimal control problems are formulated as follows. 

\noindent 
\textbf{Problem (SLQ).} For each initial pair $(t,\eta) \in [0,T)\times H$, find a $ {u}^*(\cdot) \in L_{\mathbb{F}}^{2}(t, T ; U)$, such that 
\begin{align}\label{mdfzx}
\mathcal{J}(t, \eta ;  {u}^*(\cdot))=\inf _{u(\cdot) \in L_{\mathbb{F}}^{2}(t, T ; U)} \mathcal{J}(t, \eta ; u(\cdot)). 
\end{align}

For any $(t,\eta) \in [0,T)\times H$ and  $u(\cdot) \in L_{\mathbb{F}}^{2}(0, T ; U)$, under certain conditions (cf. \eqref{H1}, by \hyperref[mtuee]{Lemma \ref{mtuee}}), there uniquely exists a mild solution $x(\cdot) \equiv x(\cdot ;t, \eta, u,(b,\sigma )) \in C_{\mathbb{F}}([t, T] ; L^{2}(\Omega ; H))$, such that 
\begin{align}\label{fgz}
|x(\cdot)|_{C_{\mathbb{F}} ([t, T] ; L^{2}(\Omega ; H) )} \leq \mathcal{C} (|\eta|_{H}+|u(\cdot)|_{L_{\mathbb{F}}^{2}(t, T ; U)} +|b|_{L_{\mathbb F}^2(\O;L^1(t,T;H))} +|\sigma |_{L_{\mathbb F}^2(t,T;H)}). 
\end{align}
Hence with further conditions \eqref{H2}, \eqref{bdjfs} also becomes well-defined.

We introduce the following common concepts in control theory. 
\begin{definition}\label{dlkwf}
    1) The Problem (SLQ) is said to be a standard ${L Q}$ problem, if for $a.e. (t,\o)$, 
    \[
    Q(\cdot) \geq 0,\quad   R \gg 0, \quad  G \geq 0 ;
    \]
    
    2) The Problem (SLQ) is called finite at ${\eta \in H}$, if the right hand side of \eqref{mdfzx}  is finite;
    
    3) The Problem (SLQ) is called (uniquely) solvable at ${\eta \in H}$, if there (uniquely) exists a control ${ {u}^*(\cdot) \in L_{\mathbb{F}}^{2}(0, T ; U)}$ satisfying \eqref{mdfzx}. If so, ${ {u}^*(\cdot)}$ is named an (the) optimal control, while the corresponding ${ {x}^*(\cdot)}$ are called an (the) optimal state, ${( {x}^*(\cdot),  {u}^*(\cdot))}$ an (the) optimal pair, respectively;
    
    4) The Problem (SLQ) is called finite (resp. (uniquely) solvable), if it is finite (resp. (uniquely) solvable) for any ${\eta \in H}$.
\end{definition}

The LQ stochastic optimal control problem is the structurally simplest nontrivial model in optimal stochastic control, which can be used to approximate various complex nonlinear models in biology, finance, and other fields. The study on LQ optimal control problem has its origins at least before Bellman et al. \cite{BGG58}. And numerous works have been presented since then, such as Bensoussan et al. \cite{BDDM07} for deterministic control systems. 
On the other hand, it is currently well-known that the coefficients of one system are difficult to measure accurately, and it is also challenging to explicitly express all factors with  variables in physical laws. A good approach to overcome these problems is to introduce randomness to blur the effects caused by these uncertainties, and to transforme it into a stochastic model for research. From this point of view, it is meaningful to add stochastic factors into the deterministic LQ models. 
See Chen et al.\cite{CLZ98}, Sun-Yong \cite{sunyong20b1,sunyong19b1} for stochastic control systems in finite dimension, and the rich references therein. 
Besides, infinite dimension setting obviously covers more situations than the finite dimension counterpart, but with far more technical difficulties (cf. Kotelenez\cite{Ko08}). As a consequence of its importance, there are a great many of works on the optimal control problems for SEEs. For example,  Liu-Tang \cite{LT23} with newly defined operator valued conditional expectation, they regard the operator valued stochastic integral as error terms and absorb it making use of limit analysis to deal with  stochastic integration of operator valued stochastic processes. Besides, the transpositon solutions for BSEEs for both vector valued and operator valued processes serving as the first and second order adjoint processes separately for optimal control problems, can be found in detail in the monograph Lü-Zhang\cite{luqzhx21}. And especially we refer several works in LQ setting (cf. Guatteri-Tessitore\cite{GG14}, Hafizoglu et al.\cite{HLLM17}, Lü\cite{L19}, Lü-Wang\cite{LW23}, Xue et al. \cite{XXZ24}). 

The expression of open-loop optimal control involves systems in the form of forward and backward SEEs (FBSEEs, for short) \eqref{aapm}. It is natural to further study its well-posedness.
However, solving the FBSEEs \eqref{aapm} is quite tough. Recently, Xu et al. \cite{xtm24} studied the  domination-monotonicity conditions associated with this kinds of systems,  employing the method of continuation. The study in our paper also provides another background for their work. As for its finite dimensional counterpart, namely forward and backward stochastic differential equations, has attracted extensive attentions for more than three decades with lots of related monographs have appeared (cf. \cite{zhjf17}, and the references therein). Nevertheless, there are rather few conclusion for FBSEEs due to its difficulties, such as the less of It\^o's formula in infinite dimension. It deserves much more attention in the future. Besides, Addona et al. \cite{AMP23} studied the uniqueness of solution for semilinear stochastic Euler-Bernoulli beam equations which describe elastic systems with structural damping, and they utilized properties of certain related FBSEEs but not through the classical It\^o-Tanaka trick.
Both \cite{AMP23} and \cite{xtm24}, as well as others, have to appeal to Yosida approximation and finite dimension approximation, but the convergence analysis is difficult due to the lack of compactness arguments in infinite setting. Interested readers are referred to \cite{luqzhx21} for more detailed introduction. \cite{XXZ24} utilized an ``discretization then continuousization" method to cope with the infinite dimensional nature of PDE systems.

It was shown by Chen et al.\cite{CLZ98} that the LQ problems may be solvable even with indefinite control weight costs, which is not true in deterministic counterpart. Seemingly, stochastic factors give us more chances to solve the control problems. Nevertheless, Lü et al.\cite{LWZ17} showed that solvable stochastic LQ problems may NOT have feedback control. Even though we always prefer seeking feedback operators in the face of LQ problems due to its robustness and elegant explicit form, the facts in \cite{LWZ17} tell us that sometimes we have no choice but to identify the optimal control utilizing the open-loop necessary conditions. This justifies the value of the kind of results in this paper to some extent. 
Even so, we acknowledge that the feedback operator is indeed one of the charming aspects of LQ problems, but its complexity is beyond the scope of this paper and will be presented elsewhere.
And as shown by \cite{sunyong19b1}, the conclusions in open-loop optimal control for stochastic LQ problems are important stepstones for the study of its optimal feedback operators.

The main contributions of this paper are summarized as follows. Utilizing dual analysis methods and by introducing certain adjoint equations, we obtain the Fr\'echet differentiable of the cost functional with respect to the control variable, and characterize the Fr\'echet derivatives in detail with the help of the operators derived from the dual analysis. Moreover, the necessary and sufficient conditions for a control to be open-loop optimal are proven to be equivalent to the convexity (or equivalently, nonnegative) of certain cost functional of related homogeneous control problems and a stationary condition (which is exactly the above Fr\'echet derivatives) together with one coupled forward and backward system. The nonhomogeneous features of the models allow one more step to study stochastic differential games. Besides, we do not make the usual Markovian assumption on the coefficients and weight operators.

The rest of this paper is organized as follows. In \hyperref[sna]{Section \ref{sna}}, we present necessary notations and technical conditions for the coefficients and weight operators. Some elementary materials about SEEs and BSEEs are gethered in \hyperref[slaf]{Section \ref{slaf}}.  In \hyperref[sbsd]{Section \ref{sbsd}}, we perform the dual analysis and character the Fr\'echet differentiable of the cost functional. The necesary and sufficient conditions for a control to be open-loop optimal are studied in \hyperref[smos]{Section \ref{smos}}. \hyperref[sygs]{Section \ref{sygs}} is dedicated to the application of the main results to two-person stochastic differential games. At last, we make some conclusions and discussions in \hyperref[slcd]{Section \ref{slcd}}.

\section{Notations and assumptions}\label{sna}
\subsection{Notations}
Denote $(\O, \cF, \mathbf F, \dbP)$ a complete filtered probability space,  $\mathbf F=\{\cF_t\}_{t\ge0}$ the natural filtration, and define a one-dimensional standard Brownian motion $\{W(t)\}_{t\ge0}$ on it.  Denote the progressive $\sigma$-field  corresponding to $\mathbf F$ by $\dbF$.  
Denote $H$ (and $U$) a real separable Hilbert space, whose norm $|\cdot|_{H}$ and inner product $ \langle \cdot,\cdot \rangle_{H}$ are defined. Assume that $A$ is an unbounded linear operator on $H$, generating a $C_0$-semigroup $\{e^{At}\}_{t\geq 0}$. 
Denote by $A^*$ the adjoint operator of $A$.  $D(A)$ is a Hilbert space with the usual graph norm, and $A^*$ is the infinitesimal generator of $\{e^{A^*t}\}_{t\geq 0}$, the adjoint $C_0$-semigroup of $\{e^{At}\}_{t\geq 0}$. 

Denote $X$ a Banach space, with the norm ${|\cdot|_{X}}$. Denote $L_{\mathcal{F}_{t}}^{p}(\Omega ; X)$ the Banach space of all ${\mathcal{F}_{t}}$-measurable random variables $\xi: \Omega \rightarrow X$ satisfying  $\mathbb{E}|\xi|_{X}^{p}<\infty$, ${t \in[0, T]}$, $p \in[1, \infty)$.  
Besides, denote $D_{\mathbb{F}} ([0, T] ; L^{p}(\Omega ; X) )$ the vector space of all $X$-valued $\mathbf{F}$-adapted processes $\varphi(\cdot):[0, T] \rightarrow L_{\mathcal{F}_{T}}^{p}(\Omega ; X)$, which is right continuous with left limits and equipped with the norm 
$|\varphi(\cdot)|_{D_{\mathbb{F}}([0, T] ; L^{p}(\Omega ; X))}:= \sup _{t \in[0, T)}(\mathbb{E}|\varphi(t)|_{X}^{p})^{1 / p}$. 
Denote $C_{\mathbb{F}}([0, T] ; L^{p}(\Omega ; X))$ the Banach space of all the $X$-valued $\mathbf{F}$-adapted processes $\varphi(\cdot):[0, T] \rightarrow L_{\mathcal{F}_{T}}^{p}(\Omega ; X)$ which is continuous and whose norm is inherited from $D_{\mathbb{F}}([0, T] ; L^{p}(\Omega ; X))$. 
What's more, denote two Banach spaces 
with $p_{1}, p_{2}, p_{3}, p_{4} \in[1, \infty)$
\begin{align*}
L_{\mathbb{F}}^{p_{1}} (\Omega ; L^{p_{2}}(0, T ; X) )=  \bigg\{&f:(0, T) \times \Omega \rightarrow X \  |\   f(\cdot)  \ \text{is }  \mathbf{F}\mbox{-}\text{adapted and}
\\
&|f|_{L_{\mathbb{F}}^{p_{1}}\left(\Omega ; L^{p_{2}}(0, T ; X)\right)} :=  \left[\mathbb{E}\left(\int_{0}^{T}|f(t)|_{X}^{p_{2}} d t\right)^{\frac{p_{1}}{p_{2}}}\right]^{\frac{1}{p_{1}}}<\infty \bigg\}
\end{align*}
and 
\begin{align*}
L_{\mathbb{F}}^{p_{2}}\left(0, T ; L^{p_{1}}(\Omega ; X)\right)=  \bigg\{&f:(0, T) \times \Omega \rightarrow X \  |\   f(\cdot)  \ \text{is }  \mathbf{F}\mbox{-}\text{adapted and}
\\ 
&|f|_{L_{\mathbb{F}}^{p_{2}}\left(0, T ; L^{p_{1}}(\Omega ; X)\right)} := \left[\int_{0}^{T}\left(\mathbb{E}|f(t)|_{X}^{p_{1}}\right)^{\frac{p_{2}}{p_{1}}} d t\right]^{\frac{1}{p_{2}}}<\infty \bigg \}. 
\end{align*}
If $p_1=p_2=p$, denote the above two spaces as $L_{\mathbb{F}}^{p } (0, T ;  X)$.  
For another Banach space $Y$, denote $\cal L(X,Y)$ the space of bounded linear operators from $X$ to $Y$, which are equipped with the usual operator norm, and, if $X=Y$, suppressed as $\cal L(X)$.
$\mathcal{S}(H)$ (resp. $\mathcal{S}(U)$) denotes the set of all the bounded self-adjoint operators in the Hilbert space $H$ (resp. $U$).

\subsection{Assumptions}
In this subsection, we present the conditions assumed in this paper. 
\begin{condition}
    We impose the following conditions for the coefficient operators and the nonhomogeneous terms in the Eq. \eqref{zfxk}:
\begin{equation}\tag{H1}\label{H1}
\begin{aligned}
    &   A_1(\cdot) \in L_{\mathbb{F}}^{1} (0, T ; L^\infty(\O;\mathcal{L}(H))), \quad  B(\cdot) \in L_{\mathbb{F}}^\infty(\O; L^{2} (0, T ; \mathcal{L}(U ; H))), 
       \\
    &   C(\cdot) \in L_{\mathbb{F}}^{2} (0, T ; L^\infty(\O;\mathcal{L}(H))),  \quad  D(\cdot) \in L_{\mathbb{F}}^{\infty} (0, T ; \mathcal{L} (U ;H ) ), 
    \\ & 
    b(\cdot) \in L_{\mathbb F}^2 (\O; L^1 (0,T; H)), \quad   \sigma(\cdot)\in L_{\mathbb F}^2 (0,T; H).
\end{aligned}
\end{equation}
\end{condition}
\begin{condition}
    Assume the following conditions for the weighting operators in \eqref{bdjfs}: 
\begin{equation}\label{H2}\tag{H2}
    \begin{aligned}
    & Q(\cdot) \in L_{\mathbb{F}}^\infty(\O; L^{2}(0, T ; \mathcal{S}(H))), \quad  R(\cdot) \in L_{\mathbb{F}}^{\infty}(0, T ; \mathcal{S}(U)), 
    \\  &   
    S(\cdot) \in L_{\mathbb{F}}^\infty(\O; L^{2} (0, T ; \mathcal{L}(H; U))),  \quad  G \in  L_{\mathcal{F}_T}^{\infty}(\O;\mathcal{S}(H)), 
    \\  & 
    \mathfrak q \in  L_{\mathbb F}^2(\O;L^1(t,T;H)), \quad  \mathfrak r \in L_{\mathbb{F}}^{2}(t, T ; U),   \quad  \mathfrak g \in L_{\mathcal{F}_T}^{2}(\O; H).
\end{aligned}
\end{equation}
\end{condition}
\begin{remark}
    If we assume $B(\cdot) \in L_{\mathbb{F}}^\infty(\O; L^{2} (0, T ; \mathcal{L}(U ; H)))$ as above, the constant $\cal C$ in \eqref{fgz} is dependent on $|B(\cdot)|_{L_{\mathbb{F}}^\infty(\O; L^{2} (0, T ; \mathcal{L}(U ; H)))}$. If it strengthens as $B(\cdot) \in L_{\mathbb{F}}^\infty (0, T ; \mathcal{L}(U ; H))$, the dependence will disappear. 
    However, the coefficient $D(\cdot)$ can not be relaxed in general, since we expect the same integrability between the control $u(\cdot)\in L_{\mathbb{F}}^{2}(t, T ; U)$ and non-homogeneous term $\sigma(\cdot) \in L_{\mathbb{F}}^{2}(t, T ; H)$ under current well-posedness results in literature, unless imposing higher order regularity restrictions for the control variable. 
\end{remark}

\section{Preliminary materials}\label{slaf}
\subsection{Stochastic evolution equation}
Consider the following $H$-valued SEE:
\begin{equation}\label{eqmaror}
    \left\{
\begin{aligned}
&d X(t)=(AX(t)+f(t,X(t)))dt +\widetilde f(t,X(t))dW(t) \quad \mbox{in}\  (0,T] 
\\
&X(0)=X_0, 
\end{aligned}
\right.
\end{equation}
where $X_0: \O \to H$ is an $\cF_0$-measurable random variable, $A$ generates a $C_0$-semigroup $\{S(t)\}_{t\ge0}$ on $H$, and $f(\cdot, \cdot), \widetilde f(\cdot, \cdot): [0,T]\times \O \times H \to H$ are two given  functions satisfying the following conditions:
\begin{condition}\label{eqmarorllccl}
(i) Both $f(\cdot,h)$ and $\widetilde f(\cdot, h)$ are $\mathbf F$-adapted for any given $h\in H$;

(ii) There exist two nonnegative functions $L_1(\cdot)\in L^1(0,T), L_2(\cdot)\in L^2(0,T)$, such that for any given $h_1, h_2 \in H$ and a.e. $t\in[0,T]$, 
\begin{equation}\label{eqmarorllc}
    \left\{
    \begin{aligned}
       & |f(t,h_1)-f(t,h_2)|_H\le L_1(t)|h_1-h_2|_H, 
        \\
       & |\widetilde f(t,h_1)-\widetilde f(t,h_2)|_H\le L_2(t)|h_1-h_2|_H,
    \end{aligned}
    \right.
    \indent   \dbP\mbox{-}a.s. 
\end{equation}
\end{condition}
\begin{definition}
    An $H$-valued $\mathbf F$-adapted continuous stochastic process $X(\cdot)$ is called a mild solution to \eqref{eqmaror} if $f(\cdot, X(\cdot))\in L^1(0,T;H)\ a.s.$, $\widetilde f(\cdot, X(\cdot))\in L_{\mathbb F}^{2,loc}(0,T;H)$, and for any $t\in[0,T]$, 
    \begin{equation}
        \begin{aligned}
            X(t)=S(t)X_0+\int_0^tS(t-s)f(s,X(s))ds +\int_0^t S(t-s)\widetilde f(s,X(s))dW(s) \quad \dbP\mbox{-}a.s. 
        \end{aligned}
    \end{equation} 
\end{definition}
\begin{lemma}\label{mtuee}
    Let Condition \ref{eqmarorllccl}   hold and $f(\cdot, 0)\in L_{\mathbb F}^p(\O;L^1(0,T;H))$, $\widetilde f(\cdot, 0)\in L_{\mathbb F}^{ p}(\O;L^2(0,T;H))$ for some $p\ge2$. Then for any $X_0\in L_{\cal F_0}^{ p}(\O;H)$, the above Eq. \eqref{eqmaror} admits a unique mild solution $X(\cdot)\in C_{\mathbb F}([0,T];L^p(\O;H))$. Moreover, 
\begin{align}\label{mtueer}
    |X(\cdot)|_{C_{\mathbb F}([0,T];L^{ p}(\O;H))} \le C\big(|X_0|_{L_{\cal F_0}^{ p}(\O;H)} +|f(\cdot, 0)|_{L_{\mathbb F}^p(\O;L^1(0,T;H))} +|\widetilde f(\cdot, 0)|_{L_{\mathbb F}^p(\O;L^2(0,T;H))}\big). 
\end{align}
\end{lemma}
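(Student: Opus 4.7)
The plan is to set up a Banach fixed-point argument on $C_{\mathbb{F}}([0,T];L^p(\Omega;H))$. Define
\begin{equation*}
\Phi(X)(t) := S(t)X_0 + \int_0^t S(t-s)f(s,X(s))\,ds + \int_0^t S(t-s)\widetilde f(s,X(s))\,dW(s),
\end{equation*}
so that $X$ solves \eqref{eqmaror} in the mild sense iff $X = \Phi(X)$. Set $M_T := \sup_{t\in[0,T]}|S(t)|_{\mathcal{L}(H)}$, finite by the $C_0$-semigroup property, and write $\|Z(t)\|_p := (\mathbb{E}|Z(t)|_H^p)^{1/p}$ for the pointwise-in-$t$ component of the norm on $C_{\mathbb{F}}([0,T];L^p(\Omega;H))$.

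First I would verify that $\Phi$ sends $C_{\mathbb{F}}([0,T];L^p(\Omega;H))$ into itself. Using \eqref{eqmarorllc} through the split $|f(s,h)|_H \leq |f(s,0)|_H + L_1(s)|h|_H$ (and similarly for $\widetilde f$), Minkowski's integral inequality in $L^p(\Omega)$ controls the drift by
\begin{equation*}
\Bigl\|\int_0^t S(t-s)f(s,X(s))\,ds\Bigr\|_p \leq M_T\int_0^t \bigl(\|f(s,0)\|_p + L_1(s)\|X(s)\|_p\bigr)\,ds,
\end{equation*}
while the Burkholder-Davis-Gundy inequality for Hilbert-valued stochastic integrals gives, for each fixed $t$,
\begin{equation*}
\mathbb{E}\Bigl|\int_0^t S(t-s)\widetilde f(s,X(s))\,dW(s)\Bigr|_H^p \leq C_p M_T^p\,\mathbb{E}\Bigl(\int_0^t |\widetilde f(s,X(s))|_H^2\,ds\Bigr)^{p/2}.
\end{equation*}
Continuity in $L^p(\Omega;H)$ of the three pieces of $\Phi(X)$ follows from strong continuity of $\{S(t)\}$ together with standard dominated-convergence arguments applied to Bochner and It\^o integrals, using the hypotheses $f(\cdot,0)\in L^p_{\mathbb{F}}(\Omega;L^1(0,T;H))$ and $\widetilde f(\cdot,0)\in L^p_{\mathbb{F}}(\Omega;L^2(0,T;H))$.

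Applying the same pair of estimates to $\Phi(X)-\Phi(Y)$, with the Lipschitz bounds now in the form $L_1|X-Y|$ and $L_2|X-Y|$, yields for $\phi(s) := \|X(s)-Y(s)\|_p^p$ an inequality of the form
\begin{equation*}
\|\Phi(X)(t)-\Phi(Y)(t)\|_p^p \leq C_p M_T^p\Bigl[\Bigl(\int_0^t L_1(s)\phi(s)^{1/p}\,ds\Bigr)^p + \Bigl(\int_0^t L_2(s)^2\phi(s)^{2/p}\,ds\Bigr)^{p/2}\Bigr].
\end{equation*}
To achieve global contractivity in one shot I would equip $C_{\mathbb{F}}([0,T];L^p(\Omega;H))$ with the Bielecki-type weighted norm
\begin{equation*}
\$X\$_\lambda := \sup_{t\in[0,T]} e^{-\lambda\Lambda(t)}\|X(t)\|_p, \qquad \Lambda(t) := \int_0^t\bigl(L_1(s)+L_2(s)^2\bigr)\,ds,
\end{equation*}
and pick $\lambda$ large enough that the coefficient multiplying $\$X-Y\$_\lambda$ on the right drops strictly below $1$; this is the standard adaptation of the Bielecki trick to time-dependent Lipschitz constants. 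The Banach fixed-point theorem then produces the unique mild solution $X\in C_{\mathbb{F}}([0,T];L^p(\Omega;H))$.

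The a priori estimate \eqref{mtueer} is obtained by applying the same drift and BDG bounds to $X = \Phi(X)$ itself; after H\"older's inequalities used to pull $|X(s)|_H^p$ out of the time integrals, one arrives at
\begin{equation*}
\mathbb{E}|X(t)|_H^p \leq K_0\bigl(|X_0|_{L^p_{\cF_0}(\Omega;H)}^p + |f(\cdot,0)|_{L^p_{\mathbb{F}}(\Omega;L^1(0,T;H))}^p + |\widetilde f(\cdot,0)|_{L^p_{\mathbb{F}}(\Omega;L^2(0,T;H))}^p\bigr) + K\int_0^t\bigl(L_1(s)+L_2(s)^2\bigr)\,\mathbb{E}|X(s)|_H^p\,ds,
\end{equation*}
and the classical Gr\"onwall inequality with $L^1$-in-time coefficient $L_1+L_2^2$ closes the bound. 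The principal obstacle sits inside step 2: because $A$ is unbounded and $S(t-s)\widetilde f(s,X(s))$ need not lie in $D(A)$, there is no $L^p$-It\^o formula available for $|X(t)|_H^p$ directly, and everything must be routed through the BDG inequality for the stochastic convolution. The resulting asymmetric treatment of drift (via Minkowski) versus diffusion (via BDG) is exactly what forces the split $L^1$-in-time versus $L^2$-in-time integrability of $L_1$ and $L_2$ hypothesized in Condition \ref{eqmarorllccl}.
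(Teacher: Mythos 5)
The paper offers no proof of this lemma at all: it is quoted verbatim from L\"u--Zhang \cite[Theorem 3.14]{luqzhx21}, so the only thing to compare your argument against is the standard proof of that cited result --- and your proposal is essentially that proof: a contraction mapping on $C_{\mathbb F}([0,T];L^p(\Omega;H))$ with a Bielecki weight built from $L_1+L_2^2$, Minkowski's integral inequality for the drift convolution, a Burkholder-type bound for the stochastic convolution, and Gr\"onwall with an $L^1$-in-time coefficient for the a priori estimate. The scheme is sound, and your closing remark correctly identifies why $L_1$ and $L_2$ carry different time-integrability. Two points deserve sharper wording. First, the stochastic convolution $t\mapsto\int_0^tS(t-s)\widetilde f(s,X(s))\,dW(s)$ is not a martingale, so BDG cannot be applied to it as a process in $t$; the fixed-$t$ moment bound you state is nevertheless correct because, with $t$ frozen, $s\mapsto\int_0^sS(t-r)\widetilde f(r,X(r))\,dW(r)$ is a genuine martingale on $[0,t]$ to which BDG applies, and one evaluates at $s=t$. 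You implicitly do this (you only ever claim the bound ``for each fixed $t$''), but the justification should be made explicit since it is the one place a naive reading would be wrong. Second, the paper's Definition 3.2 of a mild solution asks for a \emph{continuous} ($\dbP$-a.s.\ path-continuous) process, whereas your argument only delivers continuity of $t\mapsto X(t)$ into $L^p(\Omega;H)$, i.e.\ membership in $C_{\mathbb F}([0,T];L^p(\Omega;H))$ in the sense the paper defines that space. For a general (non-contraction) $C_0$-semigroup, upgrading the stochastic convolution to an a.s.\ continuous modification is a separate, nontrivial step (factorization method or approximation by Yosida approximants plus a maximal inequality), and it is part of what \cite[Theorem 3.14]{luqzhx21} actually establishes; your sketch omits it. Everything else --- the self-mapping property, the contraction under the weighted norm for large $\lambda$, and the H\"older/Jensen step that converts the $p$-th power of the convolution bounds into a Gr\"onwall inequality with kernel $L_1+L_2^2$ --- is correct as written.
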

The  above \hyperref[mtuee]{Lemma \ref{mtuee}} can be found in \cite[Theorem 3.14]{luqzhx21}.

\subsection{Backward stochastic evolution equation}

Consider the following  $H$-valued backward SEE (BSEE, for short):
\begin{equation}\label{apHm}
\left\{
\begin{aligned} 
&dy(t) = - [ A^* y(t) - f(t,y(t),Y(t))]dt + Y(t) dW(t) &\mbox{ in }[0,T),\\
& y(T) = y_T.
\end{aligned}
\right.
\end{equation}
Here, $f:[0,T]\times  \O \times H\times H  \to H$,  $y_T \in L_{\cal F_T}^{p}( \O;H)$,  $p\in(1,2]$. In detail, we assume the following conditions. 
\begin{condition}\label{iclb}
    The map $f:[0,T]\times  \O \times H\times H  \to H$ in \eqref{apHm} satisfies 
    
    (i) For any given $h_1,h_2\in H$, $f(\cdot,h_1,h_2)$ is $\mathbf F$-adapted;
        
    (ii) There exist two nonnegative functions $L_1(\cdot)\in L^1(0,T), L_2(\cdot)\in L^2(0,T)$, such that for any given $h_1,h_2,\tilde h_1,\tilde h_2\in H $ and a.e. $t\in[0,T]$, 
    \begin{equation}\label{iclbe}
    \begin{aligned}
        |f(t,h_1,h_2)-f(t,\tilde h_1,\tilde h_2)|_H\leq  L_1( t)|h_1-\tilde h_1|_H+L_2( t)|h_2-\tilde h_2|_H  
    \end{aligned}
            \indent  \dbP\mbox{-}a.s. 
    \end{equation}
\end{condition}
    
To introduce the concept of transposition solution to \eqref{apHm} and present its well-posedness, firstly we should introduce the following test SEEs 
\begin{equation}\label{apHmt}
\left\{
\begin{aligned} 
& d\f = [A\f+ v_1]ds +  v_2 dW(s) \quad  \mbox{ in }(t,T],
\\
& \f(t)=\eta,
\end{aligned}
\right.
\end{equation}
where $t\in[0,T]$, $\eta\in L^{q}_{\cF_t}(\O;H)$, $v_1\in L^1_{\dbF}(t,T;L^{q}(\O;H))$, $v_2\in L^q_{\dbF}(t,T;H)$ with $p\in(1,2]$, $ {1}/{p} + {1}/{q} =1$.

\begin{definition}\label{definition1}
A pair of processes $(y(\cdot), Y(\cdot)) \in  D_{\dbF}([0,T];L^{p}(\O;H)) \times  L^p_{\dbF}(0,T;H)$ are defined to be transposition solutions to the Eq. \eqref{apHm}, if 
\begin{align*}
   & \dbE {\big\langle \f(T),y_T\big\rangle}_{H}
    - \dbE\int_t^T {\big\langle \f(s),f(s,y(s),Y(s) )\big\rangle}_Hds\\
    &\indent 
 = \dbE {\big\langle\eta,y(t)\big\rangle}_H
    + \dbE\int_t^T {\big\langle
    v_1(s),y(s)\big\rangle}_H ds + \dbE\int_t^T
    {\big\langle v_2(s),Y(s)\big\rangle}_H ds, 
\end{align*}
where $t\in [0,T]$,  $\eta\in L^{q}_{\cF_t}(\O;H)$,   $v_1(\cdot)\in L^1_{\dbF}(t,T;L^{q}(\O;H))$,  $v_2(\cdot)\in L^q_{\dbF}(t,T; H)$, and the test stochastic process $\f\in C_{\dbF}([t,T];L^{q}(\O;H))$ is the solution in the mild sense to \eqref{apHmt}. 
\end{definition}

The following lemma presents the well-posedness of $H$-valued BSEE \eqref{apHm}, whose proof can be found in detail in 
\cite[Theorem 4.19]{luqzhx21}. 
\begin{lemma} \label{lcdpq}
    Let Condition \ref{iclb} holds. Then for $f(\cdot ,0,0) \in L^{1}_{\dbF} (0,T;L^{p}(\O;H))$, $y_T \in L_{\cal F_T}^{p}( \O;H)$, there exists a unique transposition solution  $(y(\cdot),   Y(\cdot)) \in   D_{\dbF}([0,T]; L^{p} (\O;H)) \times  L^p_{\dbF} (0,T; H )$ to the Eq.  \eqref{apHm}. 
    Moreover, 
\begin{align}
 & |(y(\cdot), Y(\cdot))|_{D_{\dbF}([0,T]; L^{p} (\O;H)) \times  L^p_{\dbF} (0,T; H)}  \notag
    \\
 &\indent \le C(|y_T|_{L_{\cal F_T}^{p}( \O;H)} + |f(\cdot ,0,0)|_{L^{1}_{\dbF} (0,T;L^{p}(\O;H))}). 
\end{align}
\end{lemma}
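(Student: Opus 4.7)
The plan is to first establish the lemma for the linear inhomogeneous BSEE (with a given free term in place of $f(t,y,Y)$) and then use a fixed-point argument on the Lipschitz perturbation.

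\textbf{Step 1: Linear case.} I would first consider the equation
\[
dy(t) = -[A^* y(t) - g(t)]dt + Y(t)dW(t), \quad y(T)=y_T,
\]
with $g(\cdot) \in L^1_{\mathbb F}(0,T;L^p(\Omega;H))$. Here the definition of transposition solution in \hyperref[definition1]{Definition \ref{definition1}} becomes linear in the test data $(\eta, v_1, v_2)$. By \hyperref[mtuee]{Lemma \ref{mtuee}} (applied with $q$ in place of $p$, where $1/p + 1/q = 1$), the test equation \eqref{apHmt} has a unique mild solution $\varphi$ with norm controlled by $|\eta|_{L^q_{\mathcal F_t}} + |v_1|_{L^1_{\mathbb F}(L^q)} + |v_2|_{L^q_{\mathbb F}(H)}$. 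Hence the right-hand side of the transposition identity, viewed as a functional of $(\eta, v_1, v_2)$, is bounded linear on $L^q_{\mathcal F_t}(\Omega;H) \times L^1_{\mathbb F}(t,T;L^q(\Omega;H)) \times L^q_{\mathbb F}(t,T;H)$. By Riesz representation on each of these reflexive/dual spaces, such a functional identifies a unique pair $(y(t), Y|_{(t,T)})$ in the dual spaces $L^p_{\mathcal F_t}(\Omega;H) \times L^p_{\mathbb F}(t,T;H)$, and varying $t$ yields the pair with the claimed $D_{\mathbb F}([0,T];L^p(\Omega;H)) \times L^p_{\mathbb F}(0,T;H)$ regularity, together with the a priori estimate
\[
|(y,Y)|_{D_{\mathbb F}\times L^p_{\mathbb F}} \le C\bigl(|y_T|_{L^p_{\mathcal F_T}} + |g|_{L^1_{\mathbb F}(0,T;L^p(\Omega;H))}\bigr).
\]
This is exactly the content of \cite[Theorem~4.11]{luqzhx21}, to which I would appeal.

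\textbf{Step 2: Semilinear case via contraction.} I would then define the map
\[
\Phi:\; D_{\mathbb F}([0,T];L^p(\Omega;H)) \times L^p_{\mathbb F}(0,T;H) \longrightarrow \text{same space},
\]
sending $(\bar y, \bar Y)$ to the transposition solution $(y,Y)$ produced by Step~1 with free term $g(t) := f(t,\bar y(t), \bar Y(t))$. By \hyperref[iclb]{Condition \ref{iclb}},
\[
|g(t)|_H \le |f(t,0,0)|_H + L_1(t)|\bar y(t)|_H + L_2(t)|\bar Y(t)|_H,
\]
so $g \in L^1_{\mathbb F}(0,T;L^p(\Omega;H))$ and $\Phi$ is well defined. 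Applying Step~1 to the difference of two iterates yields, with $1/p + 1/q = 1$,
\[
|\Phi(\bar y_1,\bar Y_1) - \Phi(\bar y_2, \bar Y_2)| \le C\int_0^T L_1(s)|\bar y_1(s)-\bar y_2(s)|_{L^p(\Omega;H)}\,ds + C\Bigl(\int_0^T L_2(s)^2 ds\Bigr)^{1/2}|\bar Y_1 - \bar Y_2|_{L^p_{\mathbb F}}.
\]
On a sufficiently short time interval $[T-\delta, T]$ the two ``smallness'' factors $\int_{T-\delta}^T L_1(s)ds$ and $\int_{T-\delta}^T L_2^2(s)ds$ can be made as small as needed, so $\Phi$ becomes a strict contraction there. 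A standard backward-pasting argument extends the unique fixed point to $[0,T]$, yielding the unique transposition solution to \eqref{apHm}. The global estimate follows by running the Step~1 bound once more at the fixed point and absorbing the linear-growth terms in $(y,Y)$ using Grönwall's inequality on the resulting integral inequality.

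\textbf{Main obstacle.} The delicate point is the $Y$-contribution. The Lipschitz bound for $f$ in the second variable is only $L^2$-in-time, whereas the ambient space for $Y$ is $L^p$-in-time with $p \in (1,2]$; Hölder passes through in the right direction only because $p\le 2$ and because $L^p_{\mathbb F}(0,T;H) \hookrightarrow L^2(0,T;L^p(\Omega;H))$ fails in general. I would handle this by organising the linear estimate so that only the $L^1$-in-time norm of the free term $g$ (with values in $L^p(\Omega;H)$) appears on the right, then controlling $\int_0^T L_2 |\bar Y_1-\bar Y_2|_{L^p(\Omega;H)}\,ds$ directly via Cauchy--Schwarz in time; on a short interval this remains a contraction in the $L^p_{\mathbb F}$-norm of $Y$, which is the only norm that needs to shrink.
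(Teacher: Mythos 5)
Your architecture — linear inhomogeneous BSEE by duality/Riesz representation against the mild solutions of the test equation \eqref{apHmt}, then a contraction for the Lipschitz perturbation on short backward intervals — is exactly the route of the source the paper itself relies on: the paper offers no independent proof of this lemma but cites \cite[Theorem 4.19]{luqzhx21}, whose argument is the one you outline (with the linear case being their Theorem 4.13-type result that you invoke in Step 1). So the approach is the right one, and for $p=2$ (the only case the paper actually uses, e.g.\ in Proposition \ref{lsbbn}) your Step 2 closes without difficulty since then $q=2$ and H\"older with exponents $(2,2)$ matches the hypothesis $L_2\in L^2(0,T)$.

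For $p\in(1,2)$, however, the point you yourself flag as the ``main obstacle'' is not actually resolved by your proposed fix, and this is a genuine gap. Writing $h(s)=\bigl(\mathbb{E}|\bar Y_1(s)-\bar Y_2(s)|_H^p\bigr)^{1/p}$, your Cauchy--Schwarz step bounds $\int_0^T L_2(s)h(s)\,ds$ by $|L_2|_{L^2(0,T)}\,|h|_{L^2(0,T)}$, but membership of $\bar Y_1-\bar Y_2$ in the ambient space $L^p_{\mathbb{F}}(0,T;H)$ only gives $h\in L^p(0,T)$, and $L^p(0,T)\subset L^2(0,T)$ holds for $p\ge 2$, not for $p<2$; your parenthetical ``passes through in the right direction only because $p\le 2$'' has the inequality backwards. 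Consequently, for $p<2$ neither the well-definedness of $\Phi$ on all of $D_{\mathbb{F}}([0,T];L^p(\Omega;H))\times L^p_{\mathbb{F}}(0,T;H)$ nor the contraction estimate follows from what you wrote: H\"older applied honestly requires $L_2\in L^{q}(0,T)$ with $q=p/(p-1)>2$, which is strictly stronger than Condition \ref{iclb}. To close this you must either (i) strengthen the hypothesis on $L_2$ to $L^q(0,T)$, or (ii) upgrade the linear a priori estimate so that it also controls $Y$ in $L^2(0,T;L^p(\Omega;H))$ — this is available from the same duality argument if one enlarges the class of test data to $v_2\in L^2(t,T;L^q(\Omega;H))$, which embeds into $L^q_{\mathbb{F}}(\Omega;L^2(t,T;H))$ by Minkowski's integral inequality so the test equation remains well posed — and then run the contraction in the finer norm for $Y$; or (iii) follow the cited reference's own treatment. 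As written, Step 2 proves the lemma only for $p=2$.
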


Provided with the above preparations on SEEs and BSEEs, we present the FBSEEs 
\begin{equation}\label{aapm}
    \left\{
    \begin{aligned} 
    &d x(t)=(Ax(t)+f_1(t,x(t),y(t),Y(t)))dt + f_2(t,x(t),y(t),Y(t))dW(t) \quad \mbox{in}\  (0,T] 
        \\
    &dy(t) = - [ A^* y(t) - f_3(t,x(t),y(t),Y(t))]dt + Y(t) dW(t) \quad  \mbox{ in }[0,T),\\
    & x(0)=x_0, \quad  y(T) = y_T,
    \end{aligned}
    \right.
\end{equation}
where $f_i:[0,T]\times  \O \times H\times H \times H  \to H, i=1,2,3$ satisfy certain conditions. 
\begin{definition}
    The triple of adapted processes $(x,y,Y)$ are defined to be transposition solutions to the FBSEEs \eqref{aapm}, if $x$ is the mild solution to its forward part, and $(y,Y)$ are the transposition solutions to its backward part. 
\end{definition}
In the first section of this article, we review several papers in the literature on the current research status of FBSEEs. See \cite{AMP23,luqzhx21,xtm24,XXZ24} and the references therein for more details.

\section{Operators derived from state processes}\label{sbsd}
As preparations representing the variational expansion of the cost functional with respect to the control variable, from the well-posedness of the Eq. \eqref{zfxk}, for any initial pair $(t,\eta) \in [0,T)\times H$  and  $u(\cdot) \in L_{\mathbb{F}}^{2}(0, T ; U)$,  define the following operators: 
\begin{equation*}
\left\{
\begin{aligned}
&    \mathtt{M}_{t}: L_{\mathbb{F}}^{2}(t, T ; U) \rightarrow L_{\mathbb{F}}^{2}(t, T ; H), \\
&  (\mathtt{M}_{t} u(\cdot) )(\cdot) := x(\cdot ; t, 0, u,(0,0)), \quad  \forall u(\cdot) \in L_{\mathbb{F}}^{2}(0, T ; U) ;
\end{aligned}
\right.
\end{equation*}
\begin{equation*}
    \left\{
        \begin{aligned}
        &    \hat{\mathtt{M}}_{t}: L_{\mathbb{F}}^{2}(t, T ; U) \rightarrow L_{\mathcal{F}_T}^{2}(\O; H), \\
        &  \hat{\mathtt{M}}_{t} u(\cdot) := x(T ; t, 0, u,(0,0)), \quad  \forall u(\cdot) \in L_{\mathbb{F}}^{2}(0, T ; U) ;
        \end{aligned}
        \right.
\end{equation*}
\begin{equation*}
        \left\{
\begin{aligned}
    & \mathtt{N}_{t}: H \rightarrow L_{\mathbb{F}}^{2}(t, T ; H) , 
    \\
    &  (\mathtt{N}_{t} \eta )(\cdot) := x(\cdot ; t, \eta, 0,(0,0)),  \quad \forall \eta \in H; 
\end{aligned}
        \right.
\end{equation*}
\begin{equation*}
    \left\{
\begin{aligned}
& \hat{\mathtt{N}}_{t}: H \rightarrow L_{\mathcal{F}_T}^{2}(\O ; H) , 
\\
&  \hat{\mathtt{N}}_{t} \eta := x(T ; t, \eta, 0,(0,0)),  \quad \forall \eta \in H;
\end{aligned}
    \right.
\end{equation*}
and the processes 
\[\mathsf{h}_t(\cdot):=x(\cdot ; t, 0, 0,(b,\sigma )),\]
where $x(\cdot ;t, \eta, u,(b,\sigma ))$ solves \eqref{zfxk}. 

Besides, by the linear structure of the control system \eqref{zfxk}, it derives 
\begin{align}\label{cfjxb}
    x(s ;t, \eta, u,(b,\sigma ))= (\mathtt{M}_{t} u(\cdot) )(s) + (\mathtt{N}_{t} \eta )(s) + \mathsf{h}_t(s), \quad s\in[t,T], \mathbb{P}\mbox{-}a.s., t\in[0,T), 
\end{align}
and 
\begin{align}\label{cjxbz}
x(T ;t, \eta, u,(b,\sigma ))= \hat{\mathtt{M}}_{t} u(\cdot) + \hat{\mathtt{N}}_{t} \eta + \mathsf{h}_t(T), \quad \mathbb{P}\mbox{-}a.s.
\end{align}

To represent the variational expansion of the cost functional, we also should study the adjoint operators of the above operators $\mathtt{M}_{t}, \mathtt{N}_{t}, \hat{\mathtt{M}}_{t}, \hat{\mathtt{N}}_{t}, t\in[0,T)$ 
\begin{equation*}
    \left\{
\begin{aligned}
    & \mathtt{M}_{t}^{*}: L_{\mathbb{F}}^{2}(t, T ; H) \rightarrow L_{\mathbb{F}}^{2}(t, T ; U), 
    \\
    & \hat{\mathtt{M}}_{t}^{*}: L_{\mathcal{F}_{T}}^{2}(\Omega ; H) \rightarrow L_{\mathbb{F}}^{2}(t, T ; U), 
    \\
    & \mathtt{N}_{t}^{*}: L_{\mathbb{F}}^{2}(t, T ; H) \rightarrow H , 
    \\
    & \hat{\mathtt{N}}_{t}^{*}: L_{\mathcal{F}_{T}}^{2}(\Omega ; H) \rightarrow H. 
\end{aligned}
\right.
\end{equation*}
Consider the following BSEE 
\begin{equation}\label{dsbsc}
    \left\{
        \begin{aligned}
            & d y=- [(A+A_1)^{*} y+C^{*} Y+\xi ] d s+Y d W(s) \quad \text { in }[t, T), 
            \\
            & y(T)=y_{T},    
        \end{aligned}
        \right.
\end{equation}
where $ y_{T} \in L_{\mathcal{F}_{T}}^{2}(\Omega ; H) $  and $\xi(\cdot) \in L_{\mathbb{F}}^{2}(0, T ; H) $. 
There are several definitions of solutions to Eq. \eqref{dsbsc}. In this paper, we adopt the concept of transposition solution, which is particularly suited for handling optimal control problems. 
See \hyperref[definition1]{Definition \ref{definition1}} for details. 
Moreover, by \hyperref[lcdpq]{Lemma \ref{lcdpq}},  for any $y_{T} \in L_{\mathcal{F}_{T}}^{2}(\Omega ; H)$ and $\xi(\cdot) \in L_{\mathbb{F}}^{2}(0, T ; H)$, there exists a unique transposition solution 
\[(y(\cdot;y_T,\xi ), Y(\cdot;y_T,\xi)) \in D_{\mathbb{F}}([t, T] ; L^{2}(\Omega ; H)) \times L_{\mathbb{F}}^{2}(t, T ; H),\]
satisfying 
\[\sup _{t \leq s \leq T} \mathbb{E}|y(s;y_T,\xi)|_{H}^{2}+\mathbb{E} \int_{t}^{T}|Y(s;y_T,\xi)|_{H}^{2} d s \leq \mathcal{C} \mathbb{E} \big( |y_{T} |_{H}^{2}+\int_{t}^{T}|\xi(s)|_{H}^{2} d s \big).\]
This estimation guarantees the boundedness of the following four linear operators.

Now we present the following proposition about the adjoint operators.
\begin{proposition}\label{lsbbn}
    For any $y_{T} \in L_{\mathcal{F}_{T}}^{2}(\Omega ; H)$ and $\xi(\cdot) \in L_{\mathbb{F}}^{2}(t, T ; H)$,
\begin{equation} 
        \left\{
\begin{aligned}
         &(\mathtt{M}_{t}^{*} \xi )(s)=B^{*} y (s;0,\xi )+D^{*} Y (s;0,\xi ),  \text { a.e. } s \in[t, T] , 
        \\
        &\mathtt{N}_{t}^{*} \xi=y (t;0,\xi ), 
\\
     &(\hat{\mathtt{M}}_{t}^{*} y_T )(s)=B^{*} y (s;y_T,0)+D^{*} Y (s;y_T,0),  \text { a.e. } s \in[t, T] , 
    \\
    &\hat{\mathtt{N}}_{t}^{*} y_T=y (t;y_T,0), 
\end{aligned}
\right.
\end{equation}
where $(y,Y)$ is the transposition solution to \eqref{dsbsc}. 
\end{proposition}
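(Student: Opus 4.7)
My plan is to read off each of the four identities directly from the definition of transposition solution (\hyperref[definition1]{Definition \ref{definition1}}) by choosing, as the test process $\varphi$ in the SEE \eqref{apHmt}, the forward state of \eqref{zfxk} with suitably specialized data, and then identifying the adjoint by comparison of dual pairings. Observe that the BSEE \eqref{dsbsc} fits the framework \eqref{apHm} with the generator $f(s,y,Y)=-(A_1(s)^*y+C(s)^*Y+\xi(s))$, so the transposition identity takes the form
\[
\mathbb{E}\langle \varphi(T),y_T\rangle_H+\mathbb{E}\int_t^T\langle \varphi,A_1^*y+C^*Y+\xi\rangle_H ds=\mathbb{E}\langle \eta,y(t)\rangle_H+\mathbb{E}\int_t^T\langle v_1,y\rangle_H ds+\mathbb{E}\int_t^T\langle v_2,Y\rangle_H ds.
\]

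First I would handle $\mathtt{M}_t^*$. Fix $u(\cdot)\in L_{\mathbb F}^2(t,T;U)$ and $\xi(\cdot)\in L_{\mathbb F}^2(t,T;H)$, and set $\varphi:=\mathtt{M}_t u=x(\cdot;t,0,u,(0,0))$. Then $\varphi$ solves \eqref{apHmt} on $[t,T]$ with $\eta=0$, $v_1=A_1\varphi+Bu$, and $v_2=C\varphi+Du$, whose required integrability follows from \eqref{H1} and the a priori estimate \eqref{fgz}. Inserting this data, together with $y_T=0$, into the above transposition identity and using $\langle\varphi,A_1^*y\rangle_H=\langle A_1\varphi,y\rangle_H$ and $\langle\varphi,C^*Y\rangle_H=\langle C\varphi,Y\rangle_H$, the $A_1$ and $C$ contributions cancel and one is left with
\[
\mathbb{E}\int_t^T\langle \mathtt{M}_t u,\xi\rangle_H ds=\mathbb{E}\int_t^T\langle u,B^*y(s;0,\xi)+D^*Y(s;0,\xi)\rangle_U ds,
\]
which identifies $(\mathtt{M}_t^*\xi)(s)=B^*y(s;0,\xi)+D^*Y(s;0,\xi)$.

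The three remaining identities follow by varying the specialization. For $\hat{\mathtt{M}}_t^*$, I would keep the same $\varphi$ but take $\xi\equiv 0$ with an arbitrary terminal datum $y_T\in L^2_{\mathcal F_T}(\Omega;H)$; the same cancellation then yields $\mathbb{E}\langle\hat{\mathtt{M}}_t u,y_T\rangle_H=\mathbb{E}\int_t^T\langle u,B^*y+D^*Y\rangle_U ds$. For $\mathtt{N}_t^*$ (respectively $\hat{\mathtt{N}}_t^*$), I would set $u\equiv 0$ and choose $\varphi=\mathtt{N}_t\eta$, so that $v_1=A_1\varphi$, $v_2=C\varphi$; all the $A_1,C$ terms on the right again cancel with their adjoint counterparts on the left, and the remaining identity collapses to $\mathbb{E}\int_t^T\langle \mathtt{N}_t\eta,\xi\rangle_H ds=\langle\eta,y(t;0,\xi)\rangle_H$ (respectively $\mathbb{E}\langle\hat{\mathtt{N}}_t\eta,y_T\rangle_H=\langle\eta,y(t;y_T,0)\rangle_H$), yielding the stated formulas after invoking the $\mathcal F_t$-measurability of $y(t)$.

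The main technical point is the legitimacy of plugging $v_1,v_2$ that depend implicitly on the test process $\varphi$ itself into \hyperref[definition1]{Definition \ref{definition1}}: this requires verifying $v_1\in L^1_{\mathbb F}(t,T;L^2(\Omega;H))$ and $v_2\in L^2_{\mathbb F}(t,T;H)$, which in turn relies on combining \eqref{fgz} for $\varphi$ with the integrability imposed on $A_1,B,C,D$ in \eqref{H1}. Once this is in hand, the cancellation of the $A_1$ and $C$ contributions is purely algebraic via the self-duality of $\langle\cdot,\cdot\rangle_H$, and no further regularity of those operators is invoked.
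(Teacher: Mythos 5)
Your proposal is correct and follows essentially the same route as the paper: both arguments plug the controlled state process of \eqref{zfxk} (with $(b,\sigma)\equiv(0,0)$) into the transposition identity of \hyperref[definition1]{Definition \ref{definition1}} as the test process, cancel the $A_1$ and $C$ terms against their adjoints, and then specialize $\eta$, $u$, $y_T$, $\xi$ in turn to read off the four adjoints. Your explicit check that $v_1,v_2$ have the integrability required of test data is a point the paper's sketch leaves implicit, but it is not a different method.
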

\begin{proof}
We merely sketch the proof to illustrate the idea. 

Taking $f(\cdot, y,Y)=-A_1^* y-C^* Y -\xi $, the Eq. \eqref{dsbsc} can be regarded as in the form of \eqref{apHm}. 
Recalling the relation $L_{\mathbb F}^1(0,T;L^p(\O;H)) \subset L_{\mathbb F}^p(\O;L^1(0,T;H))$, under the assumptions in \eqref{H1}, the conditions in Lemma \ref{lcdpq} are fulfilled.

From the definition of transposition solutions to the Eq. \eqref{dsbsc}, 
taking the test processes $\varphi $ in \eqref{apHmt} as the controlled state processes $x$ in \eqref{zfxk} with $v_1=A_1 x+B u+b, v_2= Cx+Du+\sigma $, and in particular, $(b,\sigma )\equiv(0,0)$, 
it derives
\begin{align*}
\mathbb{E} \langle x(T;t,\eta ,u,(0,0)), y_{T} \rangle_{H}-\mathbb{E}\langle\eta, y(t)\rangle_{H} =\mathbb{E} \int_{t}^{T} ( \langle u(s), B^{*} y(s)+D^{*} Y(s) \rangle_{U}-\langle x(s), \xi(s)\rangle_{H} ) d s, 
\end{align*}
where $x$ is the mild solution to \eqref{zfxk} for any initial pair $(t,\eta) \in [0,T)\times H$  and  $u(\cdot) \in L_{\mathbb{F}}^{2}(0, T ; U)$.
Then making use of the operator representations for the processes $x$ in the beginning of this section, we obtain 
\begin{align}\label{dgdbs}
    &\mathbb{E} ( \langle\hat{\mathtt{N}}_{t} \eta+\hat{\mathtt{M}}_{t} u, y_{T} \rangle_{H}-\langle\eta, y(t)\rangle_{H} )
    \\
    &=\mathbb{E} \int_{t}^{T}\left( \langle u(s), B^{*} y(s)+D^{*} Y(s) \rangle_{U}- \langle (\mathtt{N}_{t} \eta )(s)+ (\mathtt{M}_{t} u )(s), \xi(s) \rangle_{H}\right) d s. \notag 
\end{align}
Moreover, taking  
\begin{equation*}
    \left\{
        \begin{aligned}
            \eta =0, 
            \\
            y_T=0, 
        \end{aligned}
        \right.        \quad 
    \left\{
        \begin{aligned}
            u(\cdot)=0, 
            \\
            y_T=0, 
        \end{aligned}
        \right.  
        \quad 
        \left\{
            \begin{aligned}
                \xi (\cdot)=0, 
                \\
                \eta =0, 
            \end{aligned}
            \right. 
            \quad 
            \left\{
                \begin{aligned}
                    u(\cdot)=0, 
                    \\
                    \xi (\cdot)=0, 
                \end{aligned}
                \right. 
\end{equation*}
by turns in the relations \eqref{dgdbs}, the results can be proved.
\end{proof}

\section{Main results for optimal control}\label{smos}

One method to study the open-loop solvability of the Problem (SLQ) is to characterize the coefficients of $\varepsilon^k$ in the variational representation of $J(u+\varepsilon v)$, $k\in  \mathbb N_+, \varepsilon \in \mathbb R, u,v\in L_{\mathbb{F}}^{2}(t, T ; U)$. In detail, we have 
\begin{theorem}
    Under conditions \eqref{H1} and \eqref{H2}, for any given initial pair $(t,\eta) \in [0,T)\times H$, the mapping $u(\cdot) \to \mathcal{J}(t,\eta,(b,\sigma ) ; u(\cdot))$ is Fr\'echet differentiable, and its derivative at $u(\cdot)$ is 
    \[{\cal D}_u \mathcal{J}(t,\eta,(b,\sigma ) ; u(\cdot)) = \Psi^{(t)}_1 u + \Psi^{(t)}_2 \eta + \varphi^{(t)}_2,\]
where the meanings of these operators are interpreted in \eqref{xsff}. 
\end{theorem}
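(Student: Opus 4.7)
The plan is to exploit the linearity of the state in $u$ to expand $\mathcal{J}(t,\eta,(b,\sigma); u+\varepsilon v)$ as a polynomial of degree at most two in $\varepsilon$ and identify the coefficient of $\varepsilon$ with the Fr\'echet derivative applied to $v$. By \eqref{cfjxb}--\eqref{cjxbz},
$$x(\cdot\,;t,\eta,u+\varepsilon v,(b,\sigma)) = x(\cdot\,;t,\eta,u,(b,\sigma)) + \varepsilon (\mathtt{M}_t v)(\cdot),$$
with the analogous identity at $T$. Plugging these into \eqref{bdjfs} and expanding each of the six integrand terms and the two terminal terms yields the exact identity
$$\mathcal{J}(t,\eta,(b,\sigma); u+\varepsilon v) - \mathcal{J}(t,\eta,(b,\sigma); u) = \varepsilon\,\Lambda(v) + \varepsilon^2\,\Pi(v),$$
with $\Lambda$ linear and $\Pi$ a quadratic form in $v$. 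Fr\'echet differentiability then reduces to writing $\Lambda(v) = \mathbb{E}\int_t^T\langle\mathcal{D}_u\mathcal{J}(s),v(s)\rangle_U\,ds$ for some $\mathcal{D}_u\mathcal{J}\in L_{\mathbb{F}}^2(t,T;U)$, and to bounding $|\Pi(v)|\le C\,|v|_{L_{\mathbb{F}}^2(t,T;U)}^2$, which is immediate from the continuity of $\mathtt{M}_t$ and $\hat{\mathtt{M}}_t$ granted by \eqref{fgz}.

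Next, a direct bookkeeping of the $O(\varepsilon)$ terms, using the symmetry of $Q$, $R$, $G$, gives
\begin{align*}
\Lambda(v) = \mathbb{E}\int_t^T & \bigl[\langle Qx+S^*u+\mathfrak{q},\,(\mathtt{M}_t v)(s)\rangle_H + \langle Ru+Sx+\mathfrak{r},\,v(s)\rangle_U\bigr]\,ds \\
& + \mathbb{E}\langle Gx(T)+\mathfrak{g},\,\hat{\mathtt{M}}_t v\rangle_H,
\end{align*}
where $x = \mathtt{M}_t u + \mathtt{N}_t\eta + \mathsf{h}_t$ and $x(T) = \hat{\mathtt{M}}_t u + \hat{\mathtt{N}}_t\eta + \mathsf{h}_t(T)$. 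Transferring $\mathtt{M}_t$ and $\hat{\mathtt{M}}_t$ to their adjoints,
$$\mathcal{D}_u\mathcal{J}(t,\eta,(b,\sigma); u) = Ru + Sx + \mathfrak{r} + \mathtt{M}_t^*\bigl(Qx+S^*u+\mathfrak{q}\bigr) + \hat{\mathtt{M}}_t^*\bigl(Gx(T)+\mathfrak{g}\bigr).$$
Invoking Proposition \ref{lsbbn} and the linearity of \eqref{dsbsc} in $(y_T,\xi)$, the two adjoint terms combine into $B^* y + D^* Y$, where $(y,Y)$ is the transposition solution of \eqref{dsbsc} with terminal $y_T = Gx(T)+\mathfrak{g}$ and inhomogeneity $\xi = Qx+S^*u+\mathfrak{q}$. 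Splitting $x$ and $x(T)$ via \eqref{cfjxb}--\eqref{cjxbz} decomposes $(y,Y)$ correspondingly into a $u$-driven piece, an $\eta$-driven piece, and a data-driven piece; matching these against the $u$-, $\eta$-, and data-parts of the explicit term $Ru+Sx+\mathfrak{r}$ produces exactly the three summands $\Psi_1^{(t)} u$, $\Psi_2^{(t)}\eta$, and $\varphi_2^{(t)}$ claimed in \eqref{xsff}.

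The main technical point is checking that all the intermediate objects live in the right spaces so that the above manipulations are rigorous. Under \eqref{H1}--\eqref{H2}, estimate \eqref{fgz} ensures that $Qx + S^*u + \mathfrak{q} \in L^2_{\mathbb{F}}(t,T;H) \subset L^1_{\mathbb{F}}(t,T;L^2(\Omega;H))$ and $Gx(T)+\mathfrak{g}\in L_{\mathcal{F}_T}^2(\Omega;H)$, so Lemma \ref{lcdpq} yields the required $(y,Y)\in D_{\mathbb{F}}([t,T];L^2(\Omega;H))\times L_{\mathbb{F}}^2(t,T;H)$ with norm bounded linearly in $|u|_{L^2_{\mathbb{F}}(t,T;U)}$, $|\eta|_H$, and the data. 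Combined with the boundedness of $B^*$, $D^*$ from \eqref{H1}, this verifies that $u\mapsto \Psi_1^{(t)} u$ and $\eta\mapsto \Psi_2^{(t)}\eta$ are bounded into $L_{\mathbb{F}}^2(t,T;U)$ and that $\varphi_2^{(t)}\in L_{\mathbb{F}}^2(t,T;U)$. The only delicate bookkeeping lies in tracking the duality identity \eqref{dgdbs} from Proposition \ref{lsbbn} carefully enough to separate the three contributions; once that is done, the quadratic remainder $\varepsilon^2\Pi(v)$ is controlled by the same boundedness arguments, completing the proof.
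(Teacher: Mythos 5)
Your proposal is correct and follows essentially the same route as the paper: it uses the affine dependence of the state on the control via \eqref{cfjxb}--\eqref{cjxbz} to expand $\mathcal{J}(t,\eta,(b,\sigma);u+\varepsilon v)$ as a quadratic polynomial in $\varepsilon$, identifies the linear coefficient as the Fr\'echet derivative, and matches it with $\Psi^{(t)}_1 u+\Psi^{(t)}_2\eta+\varphi^{(t)}_2$ through the adjoint operators of Proposition \ref{lsbbn}. The only difference is one of ordering and scope --- the paper first rewrites the whole functional in the operator form \eqref{zdzj} and defers the $B^*y+D^*Y$ representation to Theorem \ref{dbtz}, whereas you perturb first and already fold in the BSEE identification --- but the substance is identical.
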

\begin{proof}
    With the help of the \hyperref[lsbbn]{Proposition \ref{lsbbn}}, we deduce several representations which are essential to obtain the ultimate optimal necessary conditions.  
Taking the linear expressions for the state processes in \eqref{cfjxb} and \eqref{cjxbz} into the cost functional \eqref{bdjfs}, it can be verified that 
\begin{align*}
        \mathcal{J}(t,\eta,(b,\sigma ) ; u(\cdot))= & \frac{1}{2} \mathbb{E}\Big[ \int_{t}^{T} \big[ \langle Q(s) x(s), x(s)\rangle_{H}+\langle R(s) u(s), u(s)\rangle_{U} + 2 \langle S(s)x(s),u(s) \rangle_U 
        \\  & \indent 
        +2 \langle \mathfrak{q}(s), x(s)\rangle_{H} 
        +2 \langle \mathfrak{r} (s), u(s)\rangle_{U} \big] d s 
         +\langle G x(T), x(T)\rangle_{H} + 2 \langle \mathfrak{g}, x(T)\rangle_{H}\Big],  
\\  
 =&
    \frac{1}{2} \mathbb{E} \int_{t}^{T} \langle ((\mathtt{M}_{t}^*Q\mathtt{M}_{t} + \hat{\mathtt{M}}_{t}^*G\hat{\mathtt{M}}_{t} + S\mathtt{M}_{t} + \mathtt{M}_{t}^*S^* +R)u(\cdot))(s),u(s)\rangle_U ds
\\  &
+ \mathbb{E} \int_{t}^{T} \langle ((\mathtt{M}_{t}^*Q\mathtt{N}_{t} + \hat{\mathtt{M}}_{t}^*G\hat{\mathtt{N}}_{t} + S{\mathtt{N}}_{t} )\eta)(s) , u(s)\rangle_U ds
\\  &  
+\frac{1}{2} \langle (\mathtt{N}_{t}^*Q\mathtt{N}_{t} + \hat{\mathtt{N}}_{t}^*G\hat{\mathtt{N}}_{t} ) \eta , \eta \rangle_H  
\\   & 
+  \langle  \mathtt{N}_{t}^*(Q\mathsf{h}_{t}+\mathfrak{q}) + \hat{\mathtt{N}}_{t}^*(G\mathsf{h}_{t}(T)+ \mathfrak{g}) ,\eta \rangle_H 
\\  & 
+  \mathbb{E} \int_{t}^{T} \langle  (\mathtt{M}_{t}^*(Q\mathsf{h}_{t}+\mathfrak{q}) + \hat{\mathtt{M}}_{t}^*(G\mathsf{h}_{t}(T)+ \mathfrak{g}) + S\mathsf{h}_t + \mathfrak{r})(s), u(s)\rangle_U ds
\\  & 
+ \frac{1}{2} \mathbb{E} \big[ \langle G\mathsf{h}_{t}(T) +2 \mathfrak{g},\mathsf{h}_{t}(T)\rangle_H  
+ \int_{t}^{T} \langle (Q\mathsf{h}_{t}+2 \mathfrak{q})(s), \mathsf{h}_{t} (s)\rangle_H ds \big]. 
\end{align*}
To simplify the notations, denote 
\begin{align}\label{xsff}
&\Psi^{(t)}_1=\mathtt{M}_{t}^*Q\mathtt{M}_{t} + \hat{\mathtt{M}}_{t}^*G\hat{\mathtt{M}}_{t} + S\mathtt{M}_{t} + \mathtt{M}_{t}^*S^* +R , \notag 
\\
&
\Psi^{(t)}_2= \mathtt{M}_{t}^*Q\mathtt{N}_{t} + \hat{\mathtt{M}}_{t}^*G\hat{\mathtt{N}}_{t} + S{\mathtt{N}}_{t} , \notag 
\\
&
\Psi^{(t)}_3=\mathtt{N}_{t}^*Q\mathtt{N}_{t} + \hat{\mathtt{N}}_{t}^*G\hat{\mathtt{N}}_{t},
\\
&
\varphi^{(t)}_1=\mathtt{N}_{t}^*(Q\mathsf{h}_{t}+\mathfrak{q}) + \hat{\mathtt{N}}_{t}^*(G\mathsf{h}_{t}(T)+ \mathfrak{g}), \notag 
\\
&
\varphi^{(t)}_2= \mathtt{M}_{t}^*(Q\mathsf{h}_{t}+\mathfrak{q}) + \hat{\mathtt{M}}_{t}^*(G\mathsf{h}_{t}(T)+ \mathfrak{g}) + S\mathsf{h}_t + \mathfrak{r}, \notag 
\\
&
\varphi^{(t)}_3=\frac{1}{2} \mathbb{E} \big[ \langle G\mathsf{h}_{t}(T) +2 \mathfrak{g},\mathsf{h}_{t}(T)\rangle_H  
+ \int_{t}^{T} \langle (Q\mathsf{h}_{t}+2 \mathfrak{q})(s), \mathsf{h}_{t} (s)\rangle_H ds \big].  \notag 
\end{align} 
From the regularities of the coefficients and estimates for the operators $\mathtt{M}_{t}, \hat{\mathtt{M}}_{t}, \mathtt{N}_{t}, \hat{\mathtt{N}}_{t}$ as well as their adjoint operators, it can be derived that   $\Psi^{(t)}_1$ is a bounded linear operator from $L_{\mathbb{F}}^{2}(t, T ; U)$ to itself. $\Psi^{(t)}_2$ is a bounded linear operator from $H$ to $L_{\mathbb{F}}^{2}(t, T ; U)$. $\Psi^{(t)}_3$ is a bounded linear operator from $H$ to $H$. 

Then 
\begin{align}\label{zdzj}
    \mathcal{J}(t,\eta,(b,\sigma ) ; u(\cdot))=& \frac{1}{2} \mathbb{E} \int_{t}^{T} \langle (\Psi^{(t)}_1 u(\cdot))(s),u(s)\rangle_U ds + \mathbb{E} \int_{t}^{T} \langle (\Psi^{(t)}_2 \eta )(s),u(s)\rangle_U ds  + \frac{1}{2} \langle \Psi^{(t)}_3 \eta , \eta \rangle_H 
    \\
    & + \langle \varphi^{(t)}_1, \eta \rangle_H +    \mathbb{E} \int_{t}^{T}\langle  \varphi^{(t)}_2(s), u(s)\rangle_U ds + \varphi^{(t)}_3. \notag 
\end{align}
Moreover, for any $\varepsilon \in \mathbb R$ and $v\in  L_{\mathbb{F}}^{2}(t, T ; U)$, 
\begin{align}\label{dzyb}
    \mathcal{J}(t,\eta,(b,\sigma ) ; u+\varepsilon v)=& \frac{1}{2} \mathbb{E} \int_{t}^{T} \langle (\Psi^{(t)}_1 (u+\varepsilon v) )(s),(u+\varepsilon v)(s)\rangle_U ds 
+ \frac{1}{2} \langle \Psi^{(t)}_3 \eta , \eta \rangle_H + \langle \varphi^{(t)}_1, \eta \rangle_H   \notag 
\\
&  
    + \mathbb{E} \int_{t}^{T} \langle (\Psi^{(t)}_2 \eta )(s),(u+\varepsilon v)(s)\rangle_U ds 
    +    \mathbb{E} \int_{t}^{T}\langle  \varphi^{(t)}_2(s), (u+\varepsilon v)(s)\rangle_U ds + \varphi^{(t)}_3   \notag 
    \\
    =& \mathcal{J}(t,\eta,(b,\sigma ) ; u) + \varepsilon^2 \mathcal{J}(t,0,(0,0);v) 
    \\  & + \varepsilon \mathbb{E} \int_{t}^{T} \langle (\Psi^{(t)}_1 u + \Psi^{(t)}_2 \eta + \varphi^{(t)}_2)(s), v(s)\rangle_U ds.  \notag  
\end{align}
Then by the definition of the Fr\'echet derivative, the conclusion follows.
\end{proof}
Actually, due to the relation \eqref{zdzj}, we have 
\begin{corollary}
    1) One of the necessary conditions for the problem (SLQ) associated with the initial pair $(t,\eta)$ to be finite is $\Psi^{(t)}_1\ge0$. 

    2) The problem (SLQ) with the initial pair $(t,\eta)$ is solvable if and only if $\Psi^{(t)}_1\ge0$ and there exists $ u^*(\cdot)\in L_{\mathbb{F}}^{2}(t, T ; U)$, such that ${\cal D}_u \mathcal{J}(t,\eta,(b,\sigma ) ; u^*(\cdot)) = \Psi^{(t)}_1  u^* + \Psi^{(t)}_2 \eta + \varphi^{(t)}_2=0$.

    3) 
    Moreover, if $\Psi^{(t)}_1 \gg  0$, the cost functional $\mathcal{J}(t,\eta,(b,\sigma ) ; u(\cdot))$ uniquely has an explicit minimizer $ u^* =- ({\Psi^{(t)}_1})^{-1}(\Psi^{(t)}_2 \eta + \varphi^{(t)}_2)$. 
\end{corollary}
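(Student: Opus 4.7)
The plan is to extract all three claims directly from the quadratic decomposition \eqref{zdzj} and its perturbation identity \eqref{dzyb}, both already established in the preceding theorem; no further hard analysis is required.

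For statement 1), I would proceed by contrapositive. If $\Psi^{(t)}_1 \not\ge 0$, pick $v \in L_{\mathbb{F}}^{2}(t,T;U)$ with $\mathbb{E}\int_t^T \langle (\Psi^{(t)}_1 v)(s), v(s) \rangle_U \, ds < 0$, and substitute $u = \lambda v$ into \eqref{zdzj}. The right-hand side becomes a real quadratic polynomial in $\lambda$ with strictly negative leading coefficient, so it tends to $-\infty$ as $|\lambda|\to\infty$; this contradicts finiteness of Problem (SLQ) at $\eta$, showing $\Psi^{(t)}_1\ge 0$ is necessary.

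For statement 2), I would use \eqref{dzyb} in both directions, after first recording the observation that the $\varepsilon^2$ coefficient in \eqref{dzyb} is, by direct expansion through \eqref{zdzj}, exactly the quadratic form $\frac{1}{2}\mathbb{E}\int_t^T\langle (\Psi^{(t)}_1 v)(s), v(s)\rangle_U\,ds$. For the ``only if'' direction, suppose $u^*$ is optimal; then for every $v\in L_{\mathbb{F}}^2(t,T;U)$ and every $\varepsilon\in\mathbb{R}$,
\begin{equation*}
0 \le \mathcal{J}(u^*+\varepsilon v) - \mathcal{J}(u^*) = \frac{\varepsilon^2}{2}\mathbb{E}\int_t^T \langle (\Psi^{(t)}_1 v)(s), v(s) \rangle_U \, ds + \varepsilon\, \mathbb{E}\int_t^T \langle (\Psi^{(t)}_1 u^* + \Psi^{(t)}_2\eta + \varphi^{(t)}_2)(s), v(s) \rangle_U \, ds.
\end{equation*}
Dividing by $\varepsilon$ and letting $\varepsilon\to 0^\pm$ forces the linear-in-$v$ functional to vanish for every $v$, yielding ${\cal D}_u\mathcal{J}(t,\eta,(b,\sigma);u^*) = \Psi^{(t)}_1 u^* + \Psi^{(t)}_2\eta + \varphi^{(t)}_2 = 0$; substituting this back leaves $\varepsilon^2\mathbb{E}\int_t^T\langle (\Psi^{(t)}_1 v)(s), v(s)\rangle_U\,ds \ge 0$ for every $\varepsilon$ and $v$, i.e.\ $\Psi^{(t)}_1\ge 0$. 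For the ``if'' direction, if both conditions hold, setting $\varepsilon = 1$ in the same identity gives $\mathcal{J}(u^*+v)-\mathcal{J}(u^*) = \frac{1}{2}\mathbb{E}\int_t^T \langle (\Psi^{(t)}_1 v)(s), v(s) \rangle_U \, ds \ge 0$, so $u^*$ attains the infimum.

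For statement 3), under $\Psi^{(t)}_1 \gg 0$ the bounded self-adjoint operator $\Psi^{(t)}_1$ on $L_{\mathbb{F}}^{2}(t,T;U)$ is coercive and hence boundedly invertible (by the Lax--Milgram theorem, say). The stationary equation from part 2) then has the unique solution $u^* = -(\Psi^{(t)}_1)^{-1}(\Psi^{(t)}_2\eta + \varphi^{(t)}_2)$; since $\Psi^{(t)}_1 \gg 0$ implies $\Psi^{(t)}_1\ge 0$, part 2) certifies this $u^*$ as the unique minimizer. Overall, the argument is essentially algebraic manipulation of \eqref{dzyb}, so I do not anticipate a substantive analytical obstacle; the only point worth double-checking is the identification of the $\varepsilon^2$ coefficient in \eqref{dzyb} with the purely quadratic form in $\Psi^{(t)}_1$, which is immediate from \eqref{zdzj}.
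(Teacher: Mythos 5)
Your proposal is correct and follows exactly the route the paper intends: the paper states the corollary as an immediate consequence of the quadratic decomposition \eqref{zdzj} and the perturbation identity \eqref{dzyb} without writing out the details, and your argument simply supplies those details (unbounded-below quadratic for 1), first/second-order conditions in $\varepsilon$ for 2), invertibility of the coercive self-adjoint operator $\Psi^{(t)}_1$ for 3)). The only minor imprecision is in 3): part 2) certifies $u^*$ as \emph{a} minimizer, while uniqueness comes from $\mathcal{J}(u^*+v)-\mathcal{J}(u^*)=\tfrac12\mathbb{E}\int_t^T\langle(\Psi^{(t)}_1 v)(s),v(s)\rangle_U\,ds>0$ for $v\neq0$ under $\Psi^{(t)}_1\gg0$, which your setup already yields.
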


Provided with the above preparation, we present the necessary and sufficient conditions for open-loop optimal controls. 
\begin{theorem}\label{dbtz}
    Under conditions \eqref{H1} and \eqref{H2}, for any given initial pair $(t,\eta) \in [0,T)\times H$, $u(\cdot) \in  L_{\mathbb{F}}^{2}(t, T ; U)$ is an open-loop optimal control to the Problem (SLQ) associated with $(t,\eta)$ if and only if 
    
    1) the mapping $u(\cdot) \to \mathcal{J}(t,0,(0,0) ; u(\cdot))$ is convex; 

  2)  the process $u(\cdot)$ satisfies 
\[B^*y + D^*Y + Sx+Ru + \mathfrak{r}=0, \quad  a.e. (s,\o)\in(t,T)\times \O,\]
  where the triple $(x,y,Y)$ satisfy the following 
  \begin{equation}\label{xyz1}
      \left\{
      \begin{aligned}
  & d x=[(A+A_1) x+Bu+b] d s +[Cx+Du+\sigma] d W(s) \quad \text{in}\ (t,T],
  \\ 
  & d y=- [(A+A_1)^{*} y+C^{*} Y+ Qx+S^*u+\mathfrak{q}] d s+Y d W(s) \quad \text { in }[t, T), 
  \\
  & x(t)=\eta, \quad  y(T)=Qx_{T}+\mathfrak{g}.
  \end{aligned}
  \right.
  \end{equation}
\end{theorem}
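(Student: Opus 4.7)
The plan is to exploit the second-order variational expansion \eqref{dzyb} derived in the preceding theorem and then use \hyperref[lsbbn]{Proposition \ref{lsbbn}} to translate the abstract stationary condition $\Psi^{(t)}_1 u + \Psi^{(t)}_2 \eta + \varphi^{(t)}_2 = 0$ into the concrete FBSEE-based condition of the theorem.

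For the ``only if'' direction, I would fix any $v \in L^2_{\mathbb{F}}(t,T;U)$ and treat \eqref{dzyb} as a quadratic polynomial in $\varepsilon \in \mathbb{R}$. Optimality of $u$ forces this quadratic to be nonnegative for every $\varepsilon$, so on the one hand the quadratic coefficient $\mathcal{J}(t,0,(0,0); v)$ must be nonnegative, giving condition 1) (by arbitrariness of $v$), and on the other hand the linear coefficient must vanish, yielding $\Psi^{(t)}_1 u + \Psi^{(t)}_2 \eta + \varphi^{(t)}_2 = 0$. Conversely, assuming 1) and 2), for any $u' \in L^2_{\mathbb{F}}(t,T;U)$ I would set $v = u' - u$ and $\varepsilon = 1$ in \eqref{dzyb}: the linear term vanishes by 2), while the quadratic term is nonnegative by 1), because the identity \eqref{zdzj} reduces $\mathcal{J}(t,0,(0,0); v)$ to $\tfrac12 \mathbb{E}\int_t^T \langle \Psi^{(t)}_1 v, v \rangle_U \, ds$, so convexity of $v \mapsto \mathcal{J}(t,0,(0,0); v)$ coincides with $\Psi^{(t)}_1 \geq 0$, i.e.\ nonnegativity of this quadratic form.

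The main obstacle, and the key computational step, is identifying the operator combination $\Psi^{(t)}_1 u + \Psi^{(t)}_2 \eta + \varphi^{(t)}_2$ with $B^*y + D^*Y + Sx + Ru + \mathfrak{r}$, where $(x,y,Y)$ solves \eqref{xyz1}. I would proceed as follows. By \eqref{cfjxb}--\eqref{cjxbz}, the state process $x = \mathtt{M}_t u + \mathtt{N}_t \eta + \mathsf{h}_t$ satisfies the forward equation in \eqref{xyz1} automatically. Setting $\xi := Qx + S^*u + \mathfrak{q}$ and $y_T := Gx(T) + \mathfrak{g}$, the backward part of \eqref{xyz1} identifies $(y,Y)$ with the transposition solution $(y(\cdot; y_T, \xi), Y(\cdot; y_T, \xi))$ in the notation of \hyperref[lsbbn]{Proposition \ref{lsbbn}}. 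Linearity of the BSEE in source and terminal, combined with the four formulas of that proposition, gives
\begin{equation*}
B^*y + D^*Y = \mathtt{M}_t^*(Qx + S^*u + \mathfrak{q}) + \hat{\mathtt{M}}_t^*(Gx(T) + \mathfrak{g}).
\end{equation*}

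Expanding $x$ and $x(T)$ via \eqref{cfjxb}--\eqref{cjxbz} on the right-hand side, and then adding $Sx + Ru + \mathfrak{r} = S(\mathtt{M}_t u + \mathtt{N}_t \eta + \mathsf{h}_t) + Ru + \mathfrak{r}$, the resulting nine summands regroup cleanly, according to whether they are linear in $u$, linear in $\eta$, or independent of both, into exactly the three blocks $\Psi^{(t)}_1 u$, $\Psi^{(t)}_2 \eta$, $\varphi^{(t)}_2$ defined in \eqref{xsff}. This verifies that the concrete condition $B^*y + D^*Y + Sx + Ru + \mathfrak{r} = 0$ coincides pointwise a.e.\ with the abstract stationary equation $\Psi^{(t)}_1 u + \Psi^{(t)}_2 \eta + \varphi^{(t)}_2 = 0$, completing the proof.
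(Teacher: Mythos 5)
Your proposal is correct and follows essentially the same route as the paper: the same quadratic-in-$\varepsilon$ argument from \eqref{dzyb} for both directions, and the same identification $\Psi^{(t)}_1 u + \Psi^{(t)}_2 \eta + \varphi^{(t)}_2 = B^*y + D^*Y + Sx + Ru + \mathfrak{r}$ via Proposition \ref{lsbbn} and linearity of the BSEE (you merely run that algebra from the FBSEE side toward the operator side, whereas the paper expands each of $\Psi^{(t)}_1 u$, $\Psi^{(t)}_2\eta$, $\varphi^{(t)}_2$ and sums; the content is identical).
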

\begin{proof}
By the definition of related operators and \hyperref[lsbbn]{Proposition \ref{lsbbn}}, it can be calculated that 
\begin{align*}
\Psi^{(t)}_1 u & = \mathtt{M}_{t}^*(Q\mathtt{M}_{t}+S^*) u + \hat{\mathtt{M}}_{t}^*G\hat{\mathtt{M}}_{t} u + S\mathtt{M}_{t}u  +Ru 
\\
&
= B^*y(\cdot;0,Qx(\cdot;t,0,u,(0,0))+S^*u)+ D^*Y(\cdot;0,Qx(\cdot;t,0,u,(0,0))+S^*u)
\\
& \indent 
+ B^*y(\cdot;Gx(T;t,0,u,(0,0)),0)+ D^*Y(\cdot;Gx(T;t,0,u,(0,0)) ,0)
\\
& \indent 
+ Sx(\cdot;t,0,u,(0,0)) +Ru, 
\\ 
\Psi^{(t)}_2 \eta &= \mathtt{M}_{t}^*Q\mathtt{N}_{t} \eta + \hat{\mathtt{M}}_{t}^*G\hat{\mathtt{N}}_{t} \eta + S{\mathtt{N}}_{t}  \eta 
\\  &  
= B^*y(\cdot;0,Qx(\cdot;t,\eta,0,(0,0)))+ D^*Y(\cdot;0,Qx(\cdot;t,\eta,0,(0,0)))
\\  &  \indent 
+ B^*y(\cdot;Gx(T;t,\eta,0,(0,0)),0)+ D^*Y(\cdot;Gx(T;t,\eta,0,(0,0)),0)
\\  &  \indent 
+ Sx(\cdot;t,\eta,0,(0,0)) , 
\\ 
\varphi^{(t)}_2 & = \mathtt{M}_{t}^*(Q\mathsf{h}_{t}+\mathfrak{q}) + \hat{\mathtt{M}}_{t}^*(G\mathsf{h}_{t}(T)+ \mathfrak{g}) + S\mathsf{h}_t + \mathfrak{r}
\\  &
= B^*y(\cdot;0,Q\mathsf{h}_{t}+\mathfrak{q}) + D^*Y(\cdot;0,Q\mathsf{h}_{t}+\mathfrak{q})
\\  &  \indent 
+ B^*y(\cdot;G\mathsf{h}_{t}(T)+\mathfrak{g},0) + D^*Y(\cdot;G\mathsf{h}_{t}(T)+\mathfrak{g},0) 
+ S\mathsf{h}_t + \mathfrak{r}. 
\end{align*}
Sum it up resulting in 
\begin{align*}
\Psi^{(t)}_1 u + \Psi^{(t)}_2 \eta + \varphi^{(t)}_2 & = 
B^*y(\cdot;Gx(T;t,\eta,u,(b,\sigma))+\mathfrak{g},Qx(\cdot;t,\eta,u,(b,\sigma))+S^*u+\mathfrak{q})
\\  &  \indent 
+ D^*Y(\cdot;Gx(T;t,\eta,u,(b,\sigma))+\mathfrak{g},Qx(\cdot;t,\eta,u,(b,\sigma))+S^*u+\mathfrak{q})
\\  &  \indent 
+ Sx(\cdot;t,\eta,u,(b,\sigma)) +Ru + \mathfrak{r}.
\end{align*}
Utilizing the linear structure in \eqref{cfjxb}, \eqref{cjxbz} and \eqref{dsbsc}, it can be deduced that 
\begin{align}\label{xdsb}
\Psi^{(t)}_1 u + \Psi^{(t)}_2 \eta + \varphi^{(t)}_2 = B^*y + D^*Y + Sx+Ru + \mathfrak{r}.
\end{align}

The ``sufficient" part: 
By \eqref{dzyb}, it indicates that for any $\varepsilon \in \mathbb R$ and $u, v\in  L_{\mathbb{F}}^{2}(t, T ; U)$, 
\begin{align}\label{xzef}
& \mathcal{J}(t,\eta,(b,\sigma ) ; u+\varepsilon v) -  \mathcal{J}(t,\eta,(b,\sigma ) ; u) 
\\  & \indent 
=  \varepsilon^2 \mathcal{J}(t,0,(0,0);v) + \varepsilon \mathbb{E} \int_{t}^{T} \langle (\Psi^{(t)}_1 u + \Psi^{(t)}_2 \eta + \varphi^{(t)}_2)(s), v(s)\rangle_U ds. \notag 
\end{align}
Then by \eqref{xdsb} and \eqref{xzef}, $u$ is an optimal control to the Problem (SLQ) associated with $(t,\eta)$ if 1) and 2) hold. 

The ``necessary'' part: 
By \eqref{xzef}, if $u$ is an optimal control, for any $\varepsilon \in \mathbb R$ and $ v\in  L_{\mathbb{F}}^{2}(t, T ; U)$, we have 
\begin{align*} 
    \psi (\varepsilon ,v) & := 
\varepsilon^2 \mathcal{J}(t,0,(0,0);v) + \varepsilon \mathbb{E} \int_{t}^{T} \langle (B^*y + D^*Y + Sx+Ru + \mathfrak{r})(s), v(s)\rangle_U ds 
\\
&
\  \ge 0. 
\end{align*}
It indicates that $\psi (\cdot ,v)$ is quadratic and nonnegative in $\varepsilon \in \mathbb R$ for any fixed $ v\in  L_{\mathbb{F}}^{2}(t, T ; U)$. By elementary calculus for quadratic functions, we obtain 
\begin{equation*}
    \left\{
    \begin{aligned}
        & \mathcal{J}(t,0,(0,0);v) \ge 0, \\
& \mathbb{E} \int_{t}^{T} \langle (B^*y + D^*Y + Sx+Ru + \mathfrak{r})(s), v(s)\rangle_U ds =0.
    \end{aligned}
    \right.
\end{equation*}
Moreover, from the bi-linearity of the cost funtional $\mathcal{J}(t,0,(0,0);v)$, and the equivalence between the convexity and the nonnegative for a  bi-linearity function, it derives that 1) holds. Besides, by the arbitrariness of $v\in L_{\mathbb{F}}^{2}(t, T ; U)$, the pointwise condition 2) holds. 
\end{proof}

\begin{remark}
    If the coefficients of ${\langle Sx,u\rangle}_U $ in the cost functional is $1$ but not $2$, the term $$B^*y(\cdot;0,Qx(\cdot;t,0,u,(0,0))+S^*u)+ D^*Y(\cdot;0,Qx(\cdot;t,0,u,(0,0))+S^*u)$$ in $\Psi^{(t)}_1 u$ can be changed to $B^*y(\cdot;0,Qx(\cdot;t,0,u,(0,0)))+ D^*Y(\cdot;0,Qx(\cdot;t,0,u,(0,0)))$, and consequently the $S^*u$ in the second line of \eqref{xyz1} should be erased. 
    It means that the above difference in coefficients does not have essential technical impact. 
\end{remark}

The system \eqref{xyz1} seems to be decoupled at first glance. However, it should be understood in the sense of 
\begin{equation}\label{xyzds}
    \left\{
    \begin{aligned}
& d x=[(A+A_1) x+Bu+b] d s +[Cx+Du+\sigma] d W(s) \quad \text{in}\ (t,T],
\\ 
& d y=- ((A+A_1)^{*} y+C^{*} Y+ Qx+S^*u+\mathfrak{q}) d s+Y d W(s) \quad \text { in }[t, T), 
\\
&
B^*y + D^*Y + Sx+Ru + \mathfrak{r} = 0, \quad a.e. (s,\o)\in(t,T)\times \O, 
\\
& x(t)=\eta, \quad  y(T)=Qx_{T}+\mathfrak{g}, 
\end{aligned}
\right.
\end{equation}
which is a fully-coupled forward and backward stochastic evolution system, and is rather challenging to solve (cf. \cite{xtm24} and the references therein). 
The coupling comes from the linear stationary condition, which is also called feedback constrained condition. In other word, if $u$ is an open-loop optimal control for the Problem (SLQ) with the initial pair $(t,\eta)$, it must be determined by the system of Eq. \eqref{xyzds}.
Actually, \eqref{xyzds} is in general called the optimality system of Problem (SLQ) in literature.

\section{Linear quadratic stochastic differential games}\label{sygs}

As an application of the conclusions on stochastic optimal control problems in the previous section, we study the following models on LQ two-person  stochastic differential games. 

For any given initial pair $(t,\eta) \in [0,T)\times H$, consider the following controlled SEEs: 
\begin{equation}\label{zfxg}
    \left\{
    \begin{aligned}
& d x(s)=[(A+A_1(s)) x(s)+B_1(s) u_1(s)+B_2(s) u_2(s)+b(s)] d s 
\\
  & \indent\indent  +[C(s) x(s)+D_1(s) u_1(s)+D_2(s) u_2(s)+\sigma (s)] d W(s) \quad \text{in}\ (t,T],
\\
& x(t)=\eta,      
\end{aligned}
\right.
\end{equation}
while the cost functional for player $i$ is defined with $i=1,2$ by
\begin{equation}\label{bdf}
    \begin{aligned}
        \mathcal{J}^i(t,\eta ; u_1,u_2)=\frac{1}{2} \mathbb{E}\big\{ & \int_{t}^{T}\big[\langle Q^i(s) x(s), x(s)\rangle_{H} + 2\langle S_1^i(s) x(s), u_1(s)\rangle_{U}  + 2\langle S_2^i(s) x(s), u_2(s)\rangle_{U}   
        \\  
        & \indent +\langle R^i_{11}(s) u_1(s), u_1(s)\rangle_{U}+\langle R^i_{12}(s) u_2(s), u_1(s)\rangle_{U}+\langle R^i_{21}(s) u_1(s), u_2(s)\rangle_{U}
        \\ 
 &  \indent  +\langle R^i_{22}(s) u_2(s), u_2(s)\rangle_{U} 
 +2 \langle \mathfrak{r}_1 (s), u_1(s)\rangle_{U}+2 \langle \mathfrak{r}_2 (s), u_2(s)\rangle_{U} 
 \\  
 & \indent  +2 \langle \mathfrak{q}^i(s), x(s)\rangle_{H} \big] d s 
+\langle G^i x(T), x(T)\rangle_{H}  +2\langle  \mathfrak{g}^i, x(T)\rangle_{H} \big\}. 
    \end{aligned}
\end{equation}
Formally, the cost functionals can be presented in the more explicit form
\begin{equation*}\label{gcfs}
    \begin{aligned}
      \mathcal{J}^i(t,\eta ; u_1,u_2)  = & \frac{1}{2} \mathbb{E}\bigg[ \int_{t}^{T} \left\langle 
            \begin{pmatrix}
                Q^i(s) & {S_1^i}^*(s)& {S_2^i}^*(s)  \\
                {S_1^i}(s) & R^i_{11}(s)  & R^i_{12}(s)  \\
                {S_2^i}(s) &  R^i_{21}(s) & R^i_{22}(s) 
            \end{pmatrix}
            \begin{pmatrix}
                x(s) \\  u_1(s) \\ u_2(s)
            \end{pmatrix} + 2 \begin{pmatrix}\mathfrak{q}^i(s) \\  \mathfrak{r}^i_1 (s) \\  \mathfrak{r}^i_2 (s)  \end{pmatrix},
            \begin{pmatrix}
                x(s) \\  u_1(s) \\ u_2(s)
            \end{pmatrix}
                 \right\rangle  d s 
      \\  & \indent    +\langle G^i x(T) + 2 \mathfrak{g}^i, x(T)\rangle_{H}\bigg].  
    \end{aligned}
\end{equation*}

The LQ stochastic two-person differential games are  formulated as follows. 

\noindent
\textbf{Problem (SDG).} 
For any initial pair $(t,\eta) \in [0,T)\times H$, what the controls $u_i(\cdot) \in L_{\mathbb{F}}^{2}(t, T ; U)$ should the players choose to  minimize their payoff $\mathcal{J}^i(t,\eta ; u_1,u_2)$?

\begin{definition}
A pair of strategies $ (u_{1}^{*}, u_{2}^{*} ) \in L_{\mathbb{F}}^{2}(t, T ; U) \times L_{\mathbb{F}}^{2}(t, T ; U)$ are called an open-loop Nash equilibrium of the Problem (SDG) with the initial pair $(t, \eta )$, if
\[\mathcal{J}^{1}\left(t, \eta  ; u_{1}^{*}, u_{2}^{*}\right) \leqslant \mathcal{J}^{1}\left(t, \eta ; u_{1}, u_{2}^{*}\right), \quad  \forall u_{1} \in L_{\mathbb{F}}^{2}(t, T ; U), \]
\[\mathcal{J}^{2}\left(t, \eta ; u_{1}^{*}, u_{2}^{*}\right) \leqslant \mathcal{J}^{2}\left(t, \eta ; u_{1}^{*}, u_{2}\right), \quad  \forall u_{2} \in L_{\mathbb{F}}^{2}(t, T ; U).\]
\end{definition}

As commonly done in literature, we study the open-loop two-person stochastic differential game by investigating two related stochastic optimal control problems as follows.
Suppose that $(u_1^*, u_2^*)$ is an open-loop Nash equilibrium of Problem (SDG) for the initial pair $(t , \eta )$, then consider the following two  stochastic optimal control problems: 

\noindent \textbf{Problem (SDG1)}  \ 
To minimize 
\begin{align}
    \mathcal{J}(t, \eta ; u_1):=  \mathcal{J}^1(t, \eta ; u_1,u_2^*). 
\end{align}
subject to the state equation
\begin{equation}\label{zfxg1}
    \left\{
    \begin{aligned}
& d x(s)=[(A+A_1(s)) x(s)+B_1(s) u_1(s)+B_2(s) u^*_2(s)+b(s)] d s 
\\
  & \indent\indent  +[C(s) x(s)+D_1(s) u_1(s)+D_2(s) u^*_2(s)+\sigma (s)] d W(s) \quad \text{in}\ (t,T],
\\
& x(t)=\eta; 
\end{aligned}
\right.
\end{equation}

\noindent \textbf{Problem (SDG2)}  \ 
To minimize 
\begin{align}
    \mathcal{J}(t, \eta ; u_2):=  \mathcal{J}^2(t, \eta ; u_1^*,u_2). 
\end{align}
subject to the state equation 
\begin{equation}\label{zfxg2}
    \left\{
    \begin{aligned}
& d x(s)=[(A+A_1(s)) x(s)+B_1(s) u^*_1(s)+B_2(s) u_2(s)+b(s)] d s 
\\
  & \indent\indent  +[C(s) x(s)+D_1(s) u^*_1(s)+D_2(s) u_2(s)+\sigma (s)] d W(s) \quad \text{in}\ (t,T],
\\
& x(t)=\eta. 
\end{aligned}
\right.
\end{equation}

Since $(u_1^*, u_2^*)$ is an open-loop Nash equilibrium of Problem (SDG) associated with the initial pair $(t , \eta )$, it deduces that $u_1^*$ is an open-loop optimal control of Problem (SDG1), while $u_2^*$ is an open-loop optimal control of Problem (SDG2). To apply the \hyperref[dbtz]{Theorem \ref{dbtz}}, we impose the following conditions supposed for the coefficients and weighting operators. 
\begin{condition}
Assume that the coefficients in \eqref{zfxg} and weighting operators in \eqref{bdf} satisfy the following conditions 
\begin{equation}\tag{GH1}\label{GH1}
        \begin{aligned}
            &   A_1(\cdot) \in L_{\mathbb{F}}^{1} (0, T ; L^\infty(\O;\mathcal{L}(H))), \quad  B_i(\cdot) \in L_{\mathbb{F}}^\infty(\O; L^{2} (0, T ; \mathcal{L}(U ; H))), 
               \\
            &   C(\cdot) \in L_{\mathbb{F}}^{2} (0, T ; L^\infty(\O;\mathcal{L}(H))),  \quad  D_i(\cdot) \in L_{\mathbb{F}}^{\infty} (0, T ; \mathcal{L} (U ;H ) ),
            \\
            &  b(\cdot) \in L_{\mathbb F}^2 (\O; L^1 (0,T; H)),  \quad   \sigma(\cdot)\in L_{\mathbb F}^2 (0,T; H),  \quad  i=1,2,
        \end{aligned}
\end{equation}
and 
\begin{equation}\label{GH2}\tag{GH2}
        \begin{aligned}
        & Q^i(\cdot) \in L_{\mathbb{F}}^\infty(\O; L^{2}(0, T ; \mathcal{S}(H))),  \quad  S_1^i(\cdot),S_2^i(\cdot) \in L_{\mathbb{F}}^\infty(\O; L^{2} (0, T ; \mathcal{L}(H; U))), 
        \\  & 
        R^i_{11}(\cdot),R^i_{12}(\cdot),R^i_{21}(\cdot),R^i_{22}(\cdot) \in L_{\mathbb{F}}^{\infty}(0, T ; \mathcal{S}(U)),  \quad  G^i \in  L_{\mathcal{F}_T}^{\infty}(\O;\mathcal{S}(H)), 
        \\   &  
        {\mathfrak q}^i \in  L_{\mathbb F}^2(\O;L^1(t,T;H)), \quad  {\mathfrak r}_1^i,{\mathfrak r}_2^i  \in L_{\mathbb{F}}^{2}(t, T ; U),   \quad  {\mathfrak g}^i \in L_{\mathcal{F}_T}^{2}(\O; H), \quad  i=1,2.
    \end{aligned}
\end{equation}
\end{condition}
Now we present the main conclusions of this section. 
\begin{theorem}
Under conditions \eqref{GH1} and \eqref{GH2},  $ (u_{1}^{*}, u_{2}^{*} ) $ is an open-loop Nash equilibrium of Problem (SDG) for the initial pair $(t, \eta )$ if and only if the following conditions for $i=1,2$ hold:

\noindent 1)  $ (u_{1}^{*}, u_{2}^{*} ) $ satisfy the feedback constrained conditions 
\[B_i^*y_i + D_i^*Y_i + S^i_i x+R^i_{ii}u^*_i + \frac{1}{2} ({R^i_{21}} + {R^i_{12}})^* u^*_{3-i} + \mathfrak{r}_i = 0, \quad a.e. (s,\o)\in(t,T)\times \O, \]
where the triple $(x,y_i,Y_i)$ are transposition solutions to the following FBSEEs 
\begin{equation*}
        \left\{
        \begin{aligned}
    & d x=[(A+A_1) x+B_1u_1^*+B_2u_2^*+b] d s +[Cx+D_1u_1^*+D_2u_2^*+\sigma] d W(s) \quad \text{in}\ (t,T],
    \\ 
    & d y_i=- [(A+A_1)^{*} y_i+C^{*} Y_i+ Q^i x+{S_1^i}^*u^*_1+{S_{2}^i}^*u^*_{2}+\mathfrak{q}^i] d s+Y_i d W(s) \quad \text { in }[t, T), 
    \\
    & x(t)=\eta, \quad  y_i(T)=Q^i x_{T}+\mathfrak{g}^i,  
    \end{aligned}
    \right.
\end{equation*}

\noindent 2) the map $u_i\to \mathcal{J}_0^i(t,0 ; u_i)$ is convex, where $\mathcal{J}_0^i(t,0 ; u_i)$ is the cost functional to the homogeneous optimal control problems with controlled state equations 
\begin{equation*}
    \left\{
    \begin{aligned}
& d x_i=[(A+A_1) x_i+B_iu_i ] d s +(Cx_i+D_iu_i ) d W(s) \quad \text{in}\ (t,T],  
\\ 
& x_i(t)=0, 
\end{aligned}
\right.
\end{equation*}
and cost functional
\begin{equation*}
    \begin{aligned}
        \mathcal{J}_0^i(t,0 ; u_i)=\frac{1}{2} \mathbb{E}\big[ & \int_{t}^{T}\big(\langle Q^i(s) x_i(s), x_i(s)\rangle_{H} +\langle R^i_{ii}(s) u_i(s), u_i(s)\rangle_{U} + 2{\langle S_i^i x_i , u_i\rangle}_U  \big) d s 
        \\  & 
+\langle G^i x_i(T), x_i(T)\rangle_{H}   \big]. 
    \end{aligned}
\end{equation*}
\end{theorem}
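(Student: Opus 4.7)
The plan is to reduce the two-person game to two coupled single-person Problems (SLQ) by exploiting the very definition of an open-loop Nash equilibrium, and then invoke \hyperref[dbtz]{Theorem \ref{dbtz}} on each of them. By definition, $(u_1^*,u_2^*)$ is an open-loop Nash equilibrium if and only if $u_1^*$ is open-loop optimal for Problem (SDG1) and $u_2^*$ is open-loop optimal for Problem (SDG2). So the proof splits naturally into two symmetric applications of \hyperref[dbtz]{Theorem \ref{dbtz}}, one for each player.

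The key step is the identification of coefficients. For Problem (SDG1), I would view it as an instance of Problem (SLQ) with control $u=u_1$, state-equation coefficients $A_1,B_1,C,D_1$, effective nonhomogeneous terms $\tilde b=b+B_2 u_2^*$ and $\tilde\sigma=\sigma+D_2 u_2^*$ (which remain in the required $L^2$-spaces since $B_2,D_2,u_2^*$ satisfy \eqref{GH1}), and weights $Q^1, R^1_{11}, S_1^1, G^1$. The cross term $2\langle S_2^1 x,u_2^*\rangle_U$ is $U$-linear in $u_2^*$ but $H$-linear in $x$, hence absorbs into the effective $\tilde{\mathfrak q}^1=\mathfrak q^1+(S_2^1)^*u_2^*$. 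Similarly, the mixed terms $\langle R^1_{12}u_2^*,u_1\rangle_U+\langle R^1_{21}u_1,u_2^*\rangle_U$ rewrite (using self-adjointness from \eqref{GH2}) as $\langle (R^1_{12}+R^1_{21})u_2^*,u_1\rangle_U$, which produces the effective $\tilde{\mathfrak r}_1=\mathfrak r_1+\tfrac12(R^1_{21}+R^1_{12})^*u_2^*$. The constant-in-$u_1$ terms $\langle R^1_{22}u_2^*,u_2^*\rangle_U$ and $2\langle\mathfrak r_2,u_2^*\rangle_U$ are irrelevant for optimization. The analogous identification works for Problem (SDG2) by swapping indices.

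Once the identification is in place, \hyperref[dbtz]{Theorem \ref{dbtz}} applied to Problem (SDG$i$) immediately yields: (a) the convexity of $u_i\mapsto\mathcal J_{\mathrm{SDG}i}(t,0,(0,0);u_i)$ on $L^2_{\mathbb F}(t,T;U)$, and (b) the stationarity condition $B_i^*y_i+D_i^*Y_i+S_i^i x+R^i_{ii}u_i^*+\tilde{\mathfrak r}_i=0$ together with the FBSEE having $Q^i x+(S_i^i)^*u_i^*+\tilde{\mathfrak q}^i$ as driver of the backward part. Substituting the explicit forms of $\tilde{\mathfrak r}_i$ and $\tilde{\mathfrak q}^i$ reproduces exactly the stationarity equation and the FBSEE of the statement, with the single common state $x$ (the forward equation written with both controls $u_1^*,u_2^*$). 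For the convexity claim I would observe that, upon setting $\eta=0$, $b=0$, $\sigma=0$ in Problem (SDG$i$), the forcing contribution of $u_{3-i}^*$ also vanishes in the homogenized state equation, so that the remaining quadratic form in $u_i$ coincides precisely with $\mathcal J_0^i(t,0;u_i)$ as defined in the theorem.

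The main obstacle I foresee is purely bookkeeping: tracking the factors of $\tfrac12$ and $2$ from \eqref{bdf} carefully so that the cross operator $\tfrac12(R^i_{21}+R^i_{12})^*$ appears with the correct coefficient, and ensuring that the self-adjointness in \eqref{GH2} is used exactly where needed to identify $(R^i_{12})^*+R^i_{21}$ with $(R^i_{21}+R^i_{12})^*$. A minor additional point is that, when \hyperref[dbtz]{Theorem \ref{dbtz}} is invoked for SDG$i$, the effective nonhomogeneous data $\tilde b,\tilde\sigma,\tilde{\mathfrak q}^i,\tilde{\mathfrak r}_i$ depend on the fixed opponent control $u_{3-i}^*$; hence one must verify that these new data lie in the function spaces required by conditions \eqref{H1}--\eqref{H2}, which follows directly from \eqref{GH1}--\eqref{GH2}. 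Beyond this, no new analytical input is needed: the result is a clean symmetric application of the single-player necessary-and-sufficient characterization.
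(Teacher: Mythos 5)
Your proposal is correct and follows exactly the route the paper intends: the paper itself reduces Problem (SDG) to the two single-player Problems (SDG1) and (SDG2) and then states that the result follows from Theorem \ref{dbtz} by ``lengthy but straight calculus,'' omitting the details; your coefficient identification ($\tilde b=b+B_2u_2^*$, $\tilde\sigma=\sigma+D_2u_2^*$, $\tilde{\mathfrak q}^1=\mathfrak q^1+(S_2^1)^*u_2^*$, $\tilde{\mathfrak r}_1=\mathfrak r_1+\tfrac12(R^1_{12}+R^1_{21})u_2^*$) is precisely that calculus, carried out correctly, including the membership checks in the spaces required by \eqref{H1}--\eqref{H2}.
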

\begin{proof}
    It can be verified by lengthy but straight calculus with the help of \hyperref[dbtz]{Theorem \ref{dbtz}}, and we omit the details. 
\end{proof}
Note that in the above claim 2), the convexity of the map $u_i\to \mathcal{J}_0^i(t,0 ; u_i)$ is equivalent to $\mathcal{J}_0^i(t,0 ; u_i)\ge0$ for all $u_i\in  L_{\mathbb{F}}^{2}(t, T ; U)$.

\section{Conclusions and discussions}\label{slcd}
We present necessary and sufficient conditions for open-loop optimal controls for linear quadratic stochastic optimal control problems in infinite dimension without Markovian restriction for coefficients, and characterize the Fr\'echet derivatives of the cost functional with respect to the control variable, which are exactly the stationary conditions. As applications, we employ the results to study open-loop Nash equilibria for two-person stochastic differential games. Since the examples in Lü et al.\cite{LWZ17} show that solvable stochastic LQ problems may NOT have feedback control, open-loop optimal controls are sometimes the only choice to identify the optimal controls. Besides, by Sun-Yong \cite{sunyong19b1}, the study on feedback optimal control for LQ problems relies on those results on open-loop cases. As a result, our study is a necessary and important step for LQ stochastic optimal control problems in infinite dimensional setting. Our study on the optimal feedback operators for LQ stochastic optimal control problems in infinite dimensional and nonhomogeneous setting will be presented elsewhere.

\section*{Competing interests}
The authors have no competing interests to declare that are relevant to the content of this article.  

\section*{Funding}
Research supported by National Key R\&D Program of China (No. 2022YFA1006300) and the NSFC (No. 12271030). 

\section*{Data availability}
No data was used for the research described in the article.

\bibliographystyle{IEEEtran}

\end{document}